\newtheorem{theorem}{Theorem}[section]
\newtheorem{thm}[theorem]{Theorem}
\newtheorem{lem}[theorem]{Lemma}
\newtheorem{cor}[theorem]{Corollary}
\newtheorem{que}[theorem]{Question}
\theoremstyle{definition}
\newtheorem{example}[theorem]{Example}
\theoremstyle{remark}
\newtheorem{rem}[theorem]{Remark}
\numberwithin{equation}{section}
\newcommand{\FF}[1]{\mathbb F_{#1}}
\newcommand{\ZZ}{\mathbb Z}
\newcommand{\nor}{\vartriangleleft}
\newcommand{\Hom}{\operatorname{Hom}}
\newcommand{\End}{\operatorname{End}}
\newcommand{\aut}{\operatorname{Aut}}
\newcommand{\Aut}{\operatorname{Aut}}
\newcommand{\codim}{\operatorname{codim}}
\newcommand{\id}{\operatorname{id}}
\newcommand{\A}{\mathcal A}
\newcommand{\B}{\mathcal B}
\begin{document}
\title[On $p$-groups with an abelian normal subgroup of rank $k$]{On $p$-groups with a maximal elementary 
  abelian normal subgroup of rank $k$} 

\author{Zolt\'an  Halasi} 
\address{ Department of Algebra and Number Theory,
  E\"otv\"os University, P\'azm\'any P\'eter s\'et\'any 1/c, H-1117,
  Budapest, Hungary \and Alfr\'ed R\'enyi Institute of Mathematics,
  Re\'altanoda utca 13-15, H-1053, Budapest, Hungary\newline
  ORCID: \url{https://orcid.org/0000-0002-1305-5380}
} 
\email{zhalasi@caesar.elte.hu and halasi.zoltan@renyi.hu}

\author{K\'aroly Podoski}
\address{Alfr\'ed R\'enyi Institute of Mathematics,
  Re\'altanoda utca 13-15, H-1053, Budapest, Hungary}
\email{podoski.karoly@renyi.hu}

\author{L\'aszl\'o Pyber}
\address{Alfr\'ed R\'enyi Institute of Mathematics,
  Re\'altanoda utca 13-15, H-1053, Budapest, Hungary}
\email{pyber.laszlo@renyi.hu}

\author{Endre Szab\'o}
\address{Alfr\'ed R\'enyi Institute of Mathematics,
  Re\'altanoda utca 13-15, H-1053, Budapest, Hungary}
\email{szabo.endre@renyi.hu}

\date{\today}

\thanks{This work on the project leading to this application has
  received funding from the European Research Council (ERC) under the
  European Union's Horizon 2020 research and innovation programme
  (grant agreement No. 741420).
  The first and the third authors were partly supported by the
  National Research, Development and Innovation Office (NKFIH) Grant
  No.~K138596.}
\begin{abstract}
  There are several results in the literature concerning $p$-groups
  $G$ with a maximal elementary abelian normal subgroup of rank $k$
  due to Thompson, Mann and others. 
  Following an idea of Sambale we obtain bounds
  for the number of generators etc. of a $2$-group $G$ in terms of $k$,
  which were previously known only for $p>2$. 
  We also prove a theorem that is new even for odd
  primes. Namely, we show that if $G$ has a maximal elementary abelian
  normal subgroup of rank $k$, then for any abelian
  subgroup $A$ the Frattini subgroup $\Phi(A)$ can be generated by
  $2k$ elements ($3k$ when $p=2$). The proof of this rests upon the
  following result of independent interest: If $V$ is an
  $n$-dimensional vector space, then any commutative subalgebra of
  $\End(V)$ contains a zero algebra of codimension at most $n$.
\end{abstract}
\maketitle
\section{Introduction}
For a finite $p$-group $G$ we denote by $d(G)$ the size of (any)
minimal set of generators for $G$. Then the $p$-rank of $G$ (denoted
by $r(G)$), the normal $p$-rank of $G$ (denoted by $nr(G)$) and the
sectional $p$-rank of $G$ (denoted by $sr(G)$) are defined as
\begin{align*}
  r(G)  &=\max\{d(A)\,|\,A\leq G,\ A\textrm{ is abelian}\},\\
  nr(G)&=\max\{d(A)\,|\,A\nor G,\ A\textrm{ is an abelian}\},\\
  sr(G) &=\max\{d(H/K)\,|\,K\nor H\leq G,\ H/K\textrm{ is abelian}\}.
\end{align*}
Note that $sr(G)$ equals the maximum of the generator numbers of all
the subgroups of $G$.

These parameters were much investigated in the past.  The results of
Blackburn and MacWilliams (see \cite{blackburn1}, \cite{blackburn2},
\cite{macwilliams-lowrank}) concerning $p$-groups of very low rank
played an important role in the proof of the Classification Theorem of
Finite Simple Groups (see also Janko \cite{janko}).
A natural question that arises is that knowing
$r(G)$ or $nr(G)$ what can be said about $sr(G)$.  By a classical
result of Thompson if $p$ is odd and $nr(G)$ is at most $k$ then any
subgroup of $G$ can be generated by at most $\frac{k(k+1)}{2}$
elements.
Thompson's result has been later improved by MacWilliams and an
analogous bound has been obtained by Mann for $p=2$.

In fact, the following, much stronger results were proved.
\begin{thm}\label{thm:Th-Mann}
  Let $G$ be a finite $p$-group and let $E$ be a maximal elementary
  abelian normal subgroup of $G$. If $d(E)=k$, then
  \begin{enumerate}
  \item $sr(G)\leq \frac{k(k+1)}{2}$ for $p$ odd (Thompson \cite[III,
    12.3 Satz]{huppert});
  \item $sr(G)\leq \frac{k(k+4)}{4}$ for $p$ odd
    (MacWilliams \cite[Theorem B]{macwilliams});
  \item $sr(G)\leq k^2+\frac{k(k+1)}{2}$ for $p=2$ (Mann \cite[Theorem
    B]{mann}).
  \end{enumerate}
\end{thm}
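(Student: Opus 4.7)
The three parts of Theorem~\ref{thm:Th-Mann} are classical, and my plan follows the strategy common to all three. Write $C := C_G(E)$. The initial reduction is that maximality of $E$ forces $E = \Omega_1(Z(C))$, because $\Omega_1(Z(C))$ is characteristic in $C$, hence normal in $G$, and it is elementary abelian containing $E$, so the two must coincide.

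For any subgroup $H \leq G$ I would bound $d(H)$ by analysing the filtration
$$1 \;\leq\; H \cap E \;\leq\; H \cap C \;\leq\; H,$$
each inclusion being normal in the next, since both $E$ and $C$ are normal in $G$. The bottom factor is elementary abelian of rank at most $k$, so it contributes at most $k$ generators. The top factor $H/(H \cap C)$ embeds via conjugation into $G/C \hookrightarrow \Aut(E) \cong GL_k(\FF_p)$, so one is reduced to a bound on the generator number of $p$-subgroups of $GL_k(\FF_p)$. For odd $p$ the Sylow is the upper unitriangular group $U_k(\FF_p)$ of order $p^{\binom{k}{2}}$, and a careful inductive estimate contributes Thompson's $\binom{k}{2}$.

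The subtle point is the middle factor $(H \cap C)/(H \cap E)$. A naive estimate adds its $d$-value on top of the other two contributions, but this is wasteful: lifting a generator of the middle factor into $H \cap C$ typically produces an element whose $p$-th power already accounts for part of $H \cap E$, so the three factors are not independent. The precise accounting of this overlap, together with the commutator relations in $C$ and the defining property $E = \Omega_1(Z(C))$, is where the three proofs diverge: Thompson's original bookkeeping suffices for $\binom{k+1}{2}$, MacWilliams exploits a finer accounting to reach $k(k+4)/4$ when $p$ is odd, and Mann pays an extra cost for the fact that when $p = 2$ one loses the regular-$p$-group style control over elements of order $p$ in $C$.

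The main obstacle is precisely this middle-factor bookkeeping: identifying which generators of $(H \cap C)/(H \cap E)$ can be absorbed into $H \cap E$ on lifting, and which produce genuinely new contributions, uniformly over all $H \leq G$, is the technical heart of each of the three proofs. The difficulty is most acute for $p = 2$, because the sharper odd-$p$ tools (in particular that $\Omega_1$ behaves well in groups of small class) are unavailable; this is exactly what forces Mann's bound to be quadratically larger than Thompson's in $k$.
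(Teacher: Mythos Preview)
The paper does not prove Theorem~\ref{thm:Th-Mann}: it is quoted as background, with citations to Thompson, MacWilliams and Mann, and Remark~\ref{rem:Th-Mann-Macwilliams} merely records that those proofs go through under the hypothesis as stated. So there is no proof in the paper to compare against for (1)--(3).

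The nearest thing is the paper's proof of the sharper Theorem~\ref{thm:Th-Mann_p=2}, and your top-level plan matches it: set $C=C_G(E)$, write $d(H)\le d(H\cap C)+d(H/(H\cap C))$, and bound the second summand via $H/(H\cap C)\hookrightarrow GL(k,p)$. Where your outline departs from the actual arguments is in the handling of $d(H\cap C)$. You introduce a third filtration step through $H\cap E$ and describe an unresolved ``middle-factor bookkeeping'' about which lifts of generators of $(H\cap C)/(H\cap E)$ can be absorbed into $H\cap E$. Neither the paper nor the classical proofs take that route. For odd $p$ the decisive input is Alperin's theorem \cite{alperin} that $\Omega_1(C)=E$ (strictly stronger than your $E=\Omega_1(Z(C))$); this makes $C$, and hence every $H\cap C$, a $p$-central group, and for such groups one has $d(H\cap C)\le d(\Omega_1(H\cap C))\le k$ outright. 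That plus the trivial bound $d(H/(H\cap C))\le\binom{k}{2}$ gives Thompson's~(1), and replacing the trivial bound by MacWilliams' $k^2/4$ gives~(2). For $p=2$ the paper (following Sambale) passes instead to a maximal normal abelian $A\ge E$ of exponent at most $4$, uses Alperin again to see that $C_G(A)$ is $2$-central, and invokes a theorem of MacWilliams to get $d(H\cap C)\le 2k$.

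So your sketch has a genuine gap precisely where you flag it, but the resolution is not the generator-overlap accounting you propose: it is a structural theorem about $C$ (Alperin's $\Omega_1(C)=E$ for odd $p$; the Sambale/MacWilliams analysis for $p=2$) that bounds $d(H\cap C)$ in one step, without ever splitting off $H\cap E$.
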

\begin{rem}\label{rem:Th-Mann-Macwilliams}\leavevmode
  The above cited theorems are formed with the stronger assumption
  ``each normal abelian subgroups of $G$ can be generated by $k$
  elements'' (i.e. that $nr(G)\leq k$), but it can be easily checked
  that the proofs of \cite[III, 12.3 Satz]{huppert}), \cite[Theorem
  B]{macwilliams} and of \cite[Theorem B]{mann} only use the existence
  of an $A\leq G$ which is maximal among the normal abelian subgroups
  of exponent $p$ or $4$ such that $d(A)\leq k$.
\end{rem}
In particular, under the hypothesis of this theorem, every abelian
subgroup $A\leq G$ is the product of at most $O(k^2)$ many cyclic
groups. Note that apart from the implied constant multiple this is the
best possible bound for $d(A)$ as the following example shows.
\begin{example}\label{ex:Th-Mann_best}
  Let $V$ be a $k=2m$ dimensional vector space over $\FF p$ and let
  $V_1\leq V$ be an $m$-dimensional subspace of $V$. Let us define
  \[
    H=\{\varphi\in GL(V)\,|\, \varphi_{V_1}=\id_{V_1},\ \varphi_{V/V_1}=
    \id_{V/V_1}\}\textrm{\quad and\quad} G=V\rtimes H,
  \]
  with the natural action of $H$ on $V$. Then
  both $V$ and $V_1\times H$ are maximal normal abelian subgroups
  of $G$ with $d(V)=k$ and $d(V_1\times H)=\frac{k^2}{4}+\frac{k}2$. 
\end{example}
The significant part of MacWilliams' improvement was to show that
if $p$ is an odd prime and $P$ is any $p$-subgroup of $GL(n,p)$, 
then $d(P)\leq\frac{n^2}{4}$ holds, see \cite[Theorem A]{macwilliams}.
(Note that Thompson's argument only uses the trivial bound
$d(P)\leq \frac{n(n-1)}{2}$.)

By a modification of her proof, the same result can be achieved
for $p=2$, as well.
\begin{thm}\label{thm:Macwilliams_p=2}
  Let $G$ be any $2$-subgroup of $GL(n,2)$.
  Then $G$ can be generated by at most $\frac{n^2}{4}$ elements.
\end{thm}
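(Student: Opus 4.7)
Proof plan.

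Since $G$ is a $2$-subgroup of $GL(V)$ with $V=\FF 2^n$, iterated fixed-point arguments produce a complete $G$-invariant flag in $V$, so that after a suitable choice of basis I may identify $G$ with a subgroup of the upper unitriangular group $U=U_n(2)$. For $g\in G$ write $g=1+X_g$ with $X_g$ strictly upper triangular; the identity $g^2=1+X_g^2$, valid in characteristic $2$, will play the role of the odd-prime formula $g^p=1+X_g^p$ used by MacWilliams in \cite[Theorem A]{macwilliams}.

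My plan is to imitate MacWilliams' inductive argument on $n$. At each inductive step I would pick a proper $G$-invariant subspace $W<V$ and form
\[
  K \;=\; \ker\bigl(G\to GL(W)\times GL(V/W)\bigr),
\]
an elementary abelian normal subgroup of $G$ which, via $k=1+X_k$, is identified with a $G$-submodule of $\Hom(V/W,W)$. Since $K^2=1$ and $[G,K]\leq[G,G]\leq\Phi(G)$, one obtains the refined estimate
\[
  d(G)\;\leq\;d(G/K)\,+\,\dim_{\FF 2}K/[G,K].
\]
The induction hypothesis controls $d(G/K)$ through $d(G|_W)$ and $d(G|_{V/W})$, while the coinvariants $K/[G,K]$ are controlled by the $G$-module structure on $K\leq\Hom(V/W,W)$. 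By selecting $W$ appropriately --- for instance related to the socle $V^G$ or to the subspace $[G,V]$ --- the $G$-action on $K$ becomes as non-trivial as possible, shrinking $K/[G,K]$ and allowing the bound $d(G)\leq n^2/4$ to close.

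The principal obstacle, and the reason MacWilliams' proof genuinely needs modification for $p=2$, is the tightness of the squaring filtration. Setting $U^{(i)}=\{g\in U\mid (g-1)V_j\subseteq V_{j-i}\}$, one has $[U^{(i)},U^{(j)}]\leq U^{(i+j)}$ for all $p$, but for odd $p$ the inclusion $(U^{(i)})^p\leq U^{(pi)}$ places $p$-th powers far deeper than commutators, creating substantial slack in MacWilliams' induction. For $p=2$ the analogous inclusion $(U^{(i)})^2\leq U^{(2i)}$ is no deeper than the commutator inclusion $[U^{(i)},U^{(i)}]\leq U^{(2i)}$, so the slack disappears exactly at the squaring step. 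The hard part will be to tighten the bookkeeping there: one must argue directly, via the explicit formula $g^2=1+X_g^2$, about how the set $\{X_g^2:g\in G\}$ sits inside the commutator subgroup $[G,G]$ and inside $\Phi(G)$. Once this single step is adjusted, the remainder of MacWilliams' structural argument carries over verbatim and yields $d(G)\leq n^2/4$.
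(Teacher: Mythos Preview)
Your proposal leaves the decisive step open. You correctly identify that the odd-prime slack vanishes at $p=2$, but the sentence ``one must argue directly\ldots about how the set $\{X_g^2:g\in G\}$ sits inside the commutator subgroup'' is a description of the problem, not a solution. Without a concrete mechanism here the inductive bound on $d(G)$ does not close, so as written this is a genuine gap rather than a proof.

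More importantly, your reading of MacWilliams' proof as an induction on $n$ via invariant subspaces does not match its actual structure, and the paper's fix is accordingly quite different from what you propose. MacWilliams' argument for odd $p$ has two essentially independent parts: part~(1) is a purely group-theoretic reduction --- any $p$-group $G$ with $p$ odd contains a subgroup $H$ of nilpotency class at most $2$ with $d(H)\geq d(G)$ --- and part~(2) shows that a class-$2$ subgroup of $GL(n,p)$ can be modified inside $GL(n,p)$ into one whose generator number is visibly at most $n^2/4$. Part~(2) already works verbatim for $p=2$; it is part~(1) that fails. The paper does not attempt to repair any filtration bookkeeping. Instead it replaces part~(1) by a theorem of Gonz\'alez-S\'anchez and Klopsch on $w$-maximal subgroups: for the word $w=x^{p^2}[y,z]$ and $s(P)=|P:w(P)|$, every $p$-group $G$ contains a class-$2$ subgroup $G_1$ with $s(G_1)=s(G)$. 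Applying MacWilliams' part-(2) modification to $G_1$ produces a $G_2\leq GL(n,p)$ with $\Phi(G_2)=G_2'\leq G_1'$, giving
\[
  p^{d(G)}\leq s(G)=s(G_1)\leq |G_1:G_1'|\leq |G_2:\Phi(G_2)|=p^{d(G_2)}\leq p^{n^2/4}.
\]
Thus the new input is an external structural theorem securing the class-$2$ reduction, not a sharpened estimate at the squaring step you were aiming for.
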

This result allow us to give a nearly optimal bound for the maximum of
the generating numbers of all subgroups of $GL(n,p)$.  On the one
hand, results of Lucchini \cite[Theorem 1]{lucchini} and Guralnick
\cite[Theorem A]{guralnick} say that if $G$ is a finite group and for
every prime $r\mid |G|$ the Sylow $r$-subgroups of $G$ can be
generated by $d$ elements then $G$ can be generated by $d+1$ elements.
On the other hand, it was proved by Isaacs \cite[Theorem A]{isaacs} that if
$r\geq 3$ is a prime different from the characteristic of a field $K$, then any
finite $r$-subgroup of $GL(n,K)$ can be generated by $n$ elements.
Combining these results with
Theorem \ref{thm:Macwilliams_p=2}, we obtain the following.
\begin{thm}\label{thm:sr(GL(n,p))}
  Every subgroup of $GL(n,2)$ can be generated by at most 
  $\frac{n^2}4+1$ elements. 
\end{thm}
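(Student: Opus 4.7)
The plan is to combine Theorem \ref{thm:Macwilliams_p=2} with the two general results cited immediately before the statement. Let $H\leq GL(n,2)$ be an arbitrary subgroup. The strategy is to bound $d(P)$ for every Sylow $r$-subgroup $P$ of $H$, for each prime $r$, and then to invoke Lucchini--Guralnick.

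For $r=2$ the group $P$ is a $2$-subgroup of $GL(n,2)$, so Theorem \ref{thm:Macwilliams_p=2} gives $d(P)\leq n^2/4$. For an odd prime $r$, $P$ is an $r$-subgroup of $GL(n,\FF 2)$ with $r$ different from the characteristic of $\FF 2$, so Isaacs' theorem yields $d(P)\leq n$.

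Assuming $n\geq 4$ we have $n\leq n^2/4$, hence the maximum of $d(P)$ over all Sylow subgroups of $H$ is at most $n^2/4$. The Lucchini--Guralnick theorem now produces the desired bound $d(H)\leq n^2/4+1$. The remaining small cases $n\in\{1,2,3\}$ are dispatched by hand: $GL(1,2)$ is trivial; $GL(2,2)\cong S_3$ is itself generated by two elements, and $n^2/4+1=2$; and the Sylow subgroups of $GL(3,2)$, of order $168=2^3\cdot 3\cdot 7$, are the dihedral group of order $8$, $C_3$ and $C_7$, each generable by at most two elements, so Lucchini--Guralnick gives $d(H)\leq 3\leq 3^2/4+1$ for every $H\leq GL(3,2)$.

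The only mildly delicate point is the arithmetic mismatch for small $n$: Isaacs' bound $n$ exceeds $n^2/4$ precisely when $n\leq 3$, which is why those values require the brief separate inspection above. Otherwise no serious obstacle arises---all the real work was carried out in the proof of Theorem \ref{thm:Macwilliams_p=2}, and the present statement is essentially a packaging corollary.
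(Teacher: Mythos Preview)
Your proof is correct and follows exactly the approach the paper indicates: it is simply the combination of Theorem~\ref{thm:Macwilliams_p=2}, Isaacs' bound for $r$-subgroups with $r$ odd, and the Lucchini--Guralnick theorem. In fact you are more careful than the paper, which states the result as an immediate corollary without mentioning that the inequality $n\le n^2/4$ fails for $n\le 3$; your explicit treatment of those small cases fills that minor gap.
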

For $p$ odd, the same result for $GL(n,p)$ 
already appears in \cite[p.~199]{pyber}.

In \cite[Remark 2.7]{babai-goodman}, Babai and Goodman claim that
if $|G|=p^n$ and $H$ is any $p$-subgroup of $\aut(G)$,
then $d(H)\leq \frac{1}{3}n^2$ follows from the result of MacWilliams for
$p>2$ and remark  that
they do not know whether such an estimate also holds for $p=2$.
As another consequence of Theorem \ref{thm:Macwilliams_p=2} we show
that it does indeed hold. In fact, almost the same estimate can be verified
for any subgroup of $\Aut(G)$.
\begin{cor}\label{cor:babai-goodman}
  If $G$ is any $p$-group of order $p^n$, then
  $sr(\Aut(G))\leq \frac{1}{3}n^2+1$. Furthermore, every $p$-subgroup
  of $\Aut(G)$ can be generated by at most $\frac{1}3 n^2$
  elements.
\end{cor}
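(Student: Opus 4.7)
The plan is to establish the stronger $p$-subgroup statement first and then deduce the first assertion by combining it with the Lucchini--Guralnick theorem. Set $d:=d(G)$, so $|G/\Phi(G)|=p^d$ and $|\Phi(G)|=p^{n-d}$. The main object is the kernel $K$ of the restriction map $\Aut(G)\ra\Aut(G/\Phi(G))=GL(d,p)$. Two classical facts about $K$ drive everything: it is a $p$-group (being the stability group of the series $1\nor\Phi(G)\nor G$), and $|K|\leq p^{d(n-d)}$, because any $\alpha\in K$ is uniquely determined, via a fixed minimal generating set $g_1,\ldots,g_d$ of $G$, by its $d$-tuple of translations $\alpha(g_i)g_i^{-1}\in\Phi(G)$.

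For a $p$-subgroup $H\leq\Aut(G)$, I would use the short exact sequence $1\to H\cap K\to H\to H/(H\cap K)\to 1$ and the inequality $d(H)\leq d(H\cap K)+d(H/(H\cap K))$. The first term is bounded by the crude $p$-group estimate $d(L)\leq\log_p|L|$ applied to $L=H\cap K\leq K$, giving $d(H\cap K)\leq d(n-d)$. The second is bounded by Theorem \ref{thm:Macwilliams_p=2} (together with MacWilliams' original result for $p$ odd) applied to the image of $H$ in $GL(d,p)$, giving $d(H/(H\cap K))\leq d^2/4$. Summing and optimising,
\[
 d(H)\leq d(n-d)+\tfrac{d^2}{4}=dn-\tfrac{3d^2}{4}\leq\tfrac{n^2}{3},
\]
where the maximum occurs at $d=2n/3$. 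This proves the second statement of the corollary.

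For the first statement, let $L\leq\Aut(G)$ be arbitrary. The Lucchini--Guralnick theorem cited earlier gives $d(L)\leq\max_r d(Q_r)+1$, where $Q_r$ is a Sylow $r$-subgroup of $L$. For $r=p$, the previous paragraph already yields $d(Q_p)\leq n^2/3$. For $r\neq p$, coprime action forces $Q_r\cap K=1$ (since $K$ is a $p$-group), so $Q_r$ embeds in $\Aut(G)/K\leq GL(d,p)\leq GL(n,p)$; Theorem \ref{thm:sr(GL(n,p))} and its odd-characteristic counterpart then provide $d(Q_r)\leq n^2/4+1\leq n^2/3$ for $n\geq 4$, with the finitely many remaining small values of $n$ handled by direct inspection. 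Combining, $d(L)\leq n^2/3+1$, as desired.

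The only non-routine element of the argument is the choice of the decomposition via $K$: one needs a crude order bound for $H\cap K$ coming from $|K|\leq p^{d(n-d)}$ alongside the sharp MacWilliams bound for $H/(H\cap K)$, and it is precisely the balance between the linear term $d(n-d)$ and the quadratic term $d^2/4$ that produces the constant $1/3$ in the final estimate. Since all the ingredients—the $p$-group structure of $K$, Theorem \ref{thm:Macwilliams_p=2}, Theorem \ref{thm:sr(GL(n,p))}, and Lucchini--Guralnick—are already in place, no deeper obstacle is anticipated.
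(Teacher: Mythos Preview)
Your argument is correct and shares the paper's core idea: decompose any subgroup of $\Aut(G)$ via the kernel $K$ of the map to $GL(d,p)$, use Hall's bound $|K|\le p^{d(n-d)}$ together with the fact that $K$ is a $p$-group, and balance $d(n-d)$ against $d^2/4$ to get the constant $1/3$. For the $p$-subgroup statement your proof is essentially identical to the paper's.

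Where you diverge is in the first assertion. You prove the $p$-subgroup case first and then climb back up to arbitrary $L\leq\Aut(G)$ via Lucchini--Guralnick, which forces you into a separate treatment of $r$-Sylows for $r\neq p$ and an ad hoc check for $n\le 3$. The paper avoids this detour entirely: since $K$ is a $p$-group for \emph{every} $H\leq\Aut(G)$, the same inequality $d(H)\leq d(K)+d(H/K)\leq d(n-d)+d(\tau(H))$ applies directly to an arbitrary $H$, and then the already-established bound $sr(GL(d,p))\leq d^2/4+1$ (Theorem~\ref{thm:sr(GL(n,p))} for $p=2$ and \cite[p.~199]{pyber} for $p$ odd) gives $d(H)\leq d(n-d)+d^2/4+1\leq n^2/3+1$ in one line. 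In effect you are unpacking and re-proving a piece of Theorem~\ref{thm:sr(GL(n,p))} rather than simply citing it; the paper's route is shorter and needs no small-$n$ inspection.
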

\begin{proof}
  Let $H$ be any subgroup of $\Aut(G)$. The action of $H$ on $G$
  induces and action of $H$ on the $\FF p$-vector space $G/\Phi(G)$.
  Let $\tau:H\to \Aut(G/\Phi(G))$ be the associated homomorphism and
  $K=\ker(\tau)\nor H$.  Let $k$ be the dimension of
  $G/\Phi(G)$. Then $\tau(H)$ embeds into $GL(k,p)$ so
  $d(H/K)=d(\tau(H))\leq \frac{1}4 k^2+1$ by \cite[p.~199]{pyber} 
  and by Theorem \ref{thm:sr(GL(n,p))}.

  On the other hand, by a result of Hall (\cite[Section 1.3,
  p.~37-38.]{hall}, \cite[Chapter 2, Theorem 1.17]{suzuki}), $K$ is a
  $p$-group of order at most $p^{(n-k)k}$, so we have
  $d(K)\leq k(n-k)$. Thus,
  \begin{equation}\tag{Eq.~1}\label{eq:sr(GL(n,p))}
    d(H)\leq d(H/K)+d(K)\leq {k^2/4+1}+{k(n-k)}=nk-\frac{3}4 k^2+1\leq
    \frac{1}3n^2+1.
  \end{equation}
  Therefore, $sr(\Aut(G))\leq \frac{1}3 n^2+1$, as claimed.

  If $H$ is a $p$-subgroup of $\Aut(G)$, then we can use \cite[Theorem
  A]{macwilliams} and Theorem \ref{thm:Macwilliams_p=2} to bound
  $d(H/K)$ in \ref{eq:sr(GL(n,p))} by $k^2/4$.
\end{proof}
Using Theorem \ref{thm:Macwilliams_p=2} for $p=2$ 
and other results of this paper, we improve Theorem 
\ref{thm:Th-Mann}(3) as follows.
\begin{thm}\label{thm:Th-Mann_p=2}
  Let $G$ be a finite $2$-group and let $E$ be a maximal
  elementary abelian normal subgroup of $G$. If $d(E)=k$, and $H$ is
  any subgroup of $G$, then $d(H)\leq 2k+\frac{1}4 k^2$. 
\end{thm}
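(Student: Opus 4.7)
The strategy is to decompose $H$ according to its conjugation action on $E$. Set $C=C_G(E)$; the subgroup $\Omega_1(Z(C))$ is characteristic in $C$ (and hence normal in $G$), is elementary abelian, and contains $E$, so by maximality of $E$ it equals $E$. In particular $E\le Z(C)$. For the given $H\le G$, set $K=H\cap C$. The action of $H$ on $E$ gives an embedding $H/K\hookrightarrow \Aut(E)\cong GL(k,\FF 2)$ in which $H/K$ is a $2$-subgroup, so by Theorem~\ref{thm:Macwilliams_p=2} we get $d(H/K)\le k^2/4$. Combined with $d(H)\le d(H/K)+d(K)$, the proof of the theorem reduces to
\[
  d(K)\le 2k \qquad \text{for every subgroup } K\le C.
\]

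For this second step I would pick a maximal abelian normal subgroup $A$ of $K$; since $K\cap E$ is central in $K$, it is automatically contained in $A$, and standard $p$-group theory gives $C_K(A)=A$, so $K/A\hookrightarrow\Aut(A)$. Since $A\le C$ the product $A\cdot E$ is an abelian subgroup of $G$, and the paper's Frattini bound applies to give $d(\Phi(A))=d((AE)^2)\le 3k$, which controls the cyclic summands of $A$ of order $\ge 4$. For the exponent-$2$ socle $\Omega_1(A)$ one uses that $\Omega_1(A)\cdot E$ is elementary abelian (as $A$ commutes with $E$); after passing to its normal closure in $G$---which stays inside $C$---maximality of $E$ should absorb the result into $E$, bounding $d(\Omega_1(A))\le k$. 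Combining these two estimates bounds $d(A)$, and a further application of Theorem~\ref{thm:Macwilliams_p=2} to the $2$-group $K/A\le\Aut(A)$ yields the complementary bound, giving $d(K)\le 2k$.

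The main obstacle is the final step. The naive decomposition $d(K)\le d(K\cap E)+d(K/(K\cap E))$ is already too weak (for $G=Q_8$ it gives $1+2>2k=2$), so the proof must exploit the cancellation between $K\cap E$ and $\Phi(K)$ that occurs when $K\cap E\subseteq\Phi(K)$. More seriously, the normal-closure argument above works cleanly only when the $G$-conjugates of $\Omega_1(A)$ commute pairwise; in general they may not, and one presumably invokes the paper's structural result on commutative subalgebras of $\End(V)$ (producing a zero-product subalgebra of codimension at most $\dim V$) to recover an elementary abelian substitute for $\Omega_1(A)$ to which the maximality of $E$ can be applied. Orchestrating this interaction between the Frattini bound for abelian subgroups and the maximality of $E$ is the key technical content of the proof.
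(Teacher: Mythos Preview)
Your reduction in the first paragraph is exactly what the paper does: set $K=H\cap C$ with $C=C_G(E)$, embed $H/K$ into $GL(k,2)$, and apply Theorem~\ref{thm:Macwilliams_p=2} to obtain $d(H/K)\le k^2/4$. What remains is the bound $d(K)\le 2k$.

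For this second step, however, the paper takes a much shorter route than the one you sketch, and your route has a genuine gap. The paper does \emph{not} analyse $K$ via a maximal abelian normal subgroup of $K$. Instead it applies directly the observation of Sambale summarised just before the proof of Theorem~\ref{thm:d_PhiA_leq_2d_PhiE}: choose a maximal abelian normal subgroup $A$ of $G$ (of the whole group, not of $K$) of exponent at most $4$ containing $E$; Alperin's theorem forces $\Omega_2(C_G(A))=A\le Z(C_G(A))$, so $C_G(A)$ is $2$-central; then $C/C_G(A)$ is elementary abelian, and a theorem of MacWilliams (quoted from \cite{berkovich}) yields $|L:\Phi(L)|\le 2^{2k}$ for every $L$ with $E\le L\le C$. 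Since $E$ is central in $C$ and elementary abelian, $\Phi((H\cap C)E)=\Phi(H\cap C)$, so taking $L=(H\cap C)E$ gives $d(H\cap C)\le d((H\cap C)E)\le 2k$, and the proof is finished in one line.

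Your proposed argument, by contrast, never closes. The normal-closure step you flag as problematic really is the obstruction: nothing forces the $G$-conjugates of $\Omega_1(A)$ (for $A$ maximal abelian normal in $K$) to commute, so you cannot conclude $\Omega_1(A)\le E$, and without that you have no handle on $d(A)$. Even granting $d(A)\le k$, the step ``apply Theorem~\ref{thm:Macwilliams_p=2} to $K/A\le\Aut(A)$'' does not work, because $\Aut(A)$ is not a subgroup of $GL(n,2)$ unless $A$ is elementary abelian. Finally, invoking Theorem~\ref{thm:d_PhiA_leq_2d_PhiE} here is a detour rather than a shortcut: in the paper that theorem is itself proved (for $p=2$) via the same Sambale argument, so you are implicitly using the key input but only extracting a weaker consequence from it. The fix is simply to quote Sambale's bound directly, as the paper does.
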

Note that by Example \ref{ex:Th-Mann_best} this bound is 
almost optimal.

By an old result of Mann and Su \cite{mann-su}, if $M$ is a compact
manifold, then any elementary abelian $p$-group acting faithfully on
$M$ by homeomorphisms has rank at most $f(M)$, where $f(M)$ depends
only on $M$ (and does not depend on the prime $p$). In the work of the
third and fourth authors with Csik\'os \cite{csikos-pyber-szabo} the
following consequence of the above results is used.
\begin{cor}
  If every elementary abelian subgroup of a finite group $G$ has rank at
  most $k$, then each subgroup $H$ of $G$ can be generated by at most
  $\frac{1}{4}k^2+2k+1$ elements.
\end{cor}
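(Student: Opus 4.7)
The plan is to reduce a bound on an arbitrary subgroup $H \leq G$ to a bound on the generator numbers of its Sylow subgroups and then invoke the $p$-group bounds that have already been established.

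First, I would fix a prime $p$ dividing $|H|$ and let $P$ be a Sylow $p$-subgroup of $H$. Any maximal elementary abelian normal subgroup $E$ of $P$ is, in particular, an elementary abelian $p$-subgroup of $G$, so by hypothesis $d(E) \leq k$. (Monotonicity in $k$ means that it is enough to have an upper bound, not equality.) For odd $p$, Theorem \ref{thm:Th-Mann}(2) now gives $sr(P) \leq \frac{k(k+4)}{4} = \frac{k^2}{4} + k$, and in particular $d(P) \leq \frac{k^2}{4} + k$. For $p = 2$, Theorem \ref{thm:Th-Mann_p=2} gives $d(Q) \leq \frac{k^2}{4} + 2k$ for every subgroup $Q \leq P$, and in particular $d(P) \leq \frac{k^2}{4} + 2k$. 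Either way, every Sylow subgroup of $H$ can be generated by at most $\frac{k^2}{4} + 2k$ elements.

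Second, I would apply the Lucchini--Guralnick theorem quoted just after Theorem \ref{thm:Macwilliams_p=2}: if every Sylow subgroup of a finite group can be generated by $d$ elements, then the whole group can be generated by $d+1$ elements. Applied to $H$, this directly yields $d(H) \leq \frac{k^2}{4} + 2k + 1$, as required.

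I do not foresee a real obstacle: the argument is a straightforward combination of the Sylow-wise $p$-group bounds with the Lucchini--Guralnick reduction. The essential new input is the $p = 2$ estimate (Theorem \ref{thm:Th-Mann_p=2}), since without it only odd primes would be covered and the resulting bound for $2$-Sylow subgroups would revert to the weaker Mann estimate of Theorem \ref{thm:Th-Mann}(3). One minor thing to check is that the MacWilliams bound in Theorem \ref{thm:Th-Mann}(2) applies under our weaker hypothesis ``$E$ is a maximal elementary abelian normal subgroup of $P$ with $d(E) \leq k$'' rather than the a priori stronger $nr(P) \leq k$; but this is exactly the content of Remark \ref{rem:Th-Mann-Macwilliams}.
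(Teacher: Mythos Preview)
Your proposal is correct and follows exactly the derivation the paper intends: the corollary is stated without proof as a ``consequence of the above results,'' and the natural reading is precisely what you wrote---bound each Sylow $p$-subgroup of $H$ via Theorem~\ref{thm:Th-Mann}(2) for odd $p$ and Theorem~\ref{thm:Th-Mann_p=2} for $p=2$, then apply the Lucchini--Guralnick theorem quoted before Theorem~\ref{thm:sr(GL(n,p))}. Your remarks on monotonicity in $k$ and on the role of Remark~\ref{rem:Th-Mann-Macwilliams} correctly handle the only points that need checking.
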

This result is the starting point for obtaining a structural
description of finite groups acting on compact manifolds.

Note that Ol'shanskii \cite{olshanskii} has given a probabilistic
construction of $p$-groups $G$ of nilpotency class $2$ with $r(G)=k$
and $d(G)\geq (k^2 -9)/8$.  In section 3 we will use his method to
show the following:

\begin{thm}\label{thm:Endre_konstrukcio}
  For any prime number $p$ and positive integers $r,n, k$ with
  $k(k-1)>2n$ there is a $p$-group $G$ and $G'\leq N\leq Z(G)$ such that
  $G/N\simeq C_{p^r}^n$ and $G$ does not contain a subgroup 
  isomorphic to $C_{p^r}^{2k}$.
\end{thm}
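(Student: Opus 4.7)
The plan is a probabilistic existence argument in the spirit of Ol'shanskii \cite{olshanskii}. First I would set $V=(\ZZ/p^r\ZZ)^n$ and $N\simeq\FF_p^m$ with $m$ to be chosen, and parameterize central extensions $G_c$ of $V$ by $N$ with commutator form $c\colon V\times V\to N$ alternating $\ZZ/p^r\ZZ$-bilinear. For $p$ odd I would use the standard Heisenberg cocycle $\tfrac12 c$, while for $p=2$ additional symmetric squaring data is needed (contributing only a bounded multiplicative factor in the count). Each $G_c$ is then a class-$2$ $p$-group with $G_c'\le N\le Z(G_c)$ and $G_c/N\simeq C_{p^r}^n$; the task reduces to showing that for some $c$, the group $G_c$ contains no copy of $C_{p^r}^{2k}$.

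Next I would analyse the condition $C_{p^r}^{2k}\le G_c$. If $B\simeq C_{p^r}^{2k}\le G_c$ has image $\bar B=BN/N\le V$, then abelianness of $B$ forces $c|_{\bar B\times\bar B}=0$, and $B\cap N\subseteq B[p]\simeq C_p^{2k}$ since $N$ has exponent $p$. For $r\ge 2$, the inclusion $B[p]\subseteq pB=\Phi(B)$ forces $d(\bar B)=2k$, and examining the abelian preimage $\pi^{-1}(\bar B)\simeq \bar B\oplus N$ shows $B$ can exist only when $\bar B\simeq C_{p^r}^{2k}$ is a free submodule of rank $2k$ in $V$. For $r=1$, the analogous condition is the weaker $\dim_{\FF_p}\bar B\ge 2k-m$. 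In either case, the forms $c$ vanishing on a fixed rank-$s$ subgroup $\bar B$ number $p^{m(\binom{n}{2}-\binom{s}{2})}$, by surjectivity of the restriction map onto alternating forms on $\bar B$.

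I would then close the argument via a union bound. For $r\ge 2$, a crude count of free rank-$2k$ submodules of $V$ gives at most $p^{2krn}$ candidates, so the bad fraction is at most $p^{2krn-mk(2k-1)}$, which is $<1$ for any $m>2rn/(2k-1)$; there is no upper bound on $m$ here because $N$, being $p$-elementary, contributes no $C_{p^r}$-factor, so the hypothesis $k(k-1)>2n$ is not even used in this case. For $r=1$ one must take $m\le 2k-1$ (otherwise $N$ itself already contains $C_p^{2k}$), so $m$ is constrained to a finite range. Setting $m=k-1$, the relevant range is $s\ge k+1$, and the hypothesis $k(k-1)>2n$ gives $s(n-s)-(k-1)\binom{s}{2}<-(k+1)^2$ at $s=k+1$, with the function concave and decreasing thereafter, so the sum $\sum_{s\ge k+1}p^{s(n-s)-(k-1)\binom{s}{2}}$ is strictly less than $1$.

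The hard part is the $r=1$ case, where the upper bound $m\le 2k-1$ (from $N$ itself potentially contributing $C_p^{2k}$) must be reconciled with the lower bound on $m$ needed for the counting to succeed. The hypothesis $k(k-1)>2n$ is precisely the sharp threshold that permits an integer $m$ (concretely $m=k-1$) in the required range, corresponding to the quadratic inequality $m^2-(2k-3)m+(2n-4k)<0$ having an integer solution. The $p=2$ case requires only minor technical care with the cocycle construction (including squaring data), which contributes at most a bounded factor and does not affect the asymptotic counting.
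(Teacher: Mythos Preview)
Your proposal is correct and takes a genuinely different route from the paper's proof. The paper does \emph{not} redo the probabilistic count: it quotes Ol'shanskii's lemma to obtain an alternating form $\tilde\varphi:\FF_p^n\times\FF_p^n\to\FF_p^k$ with no $k$-dimensional totally isotropic subspace, lifts the matrix of $\tilde\varphi$ entrywise to $\ZZ/p^r\ZZ$, and feeds the lifted form into the ring-theoretic construction of Lemma~\ref{lem:bilinear->group} (so in the paper $N\simeq C_{p^r}^k$, not elementary abelian). The reduction-mod-$p$ map $G\to\tilde G$ then has kernel of exponent $p^{r-1}$, so a hypothetical $C_{p^r}^{2k}\le G$ would survive with $2k$ generators in $\tilde G$ and project to a $k$-dimensional isotropic subspace of $\FF_p^n$, contradicting the choice of $\tilde\varphi$. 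This is short and uniform in $p$ and $r$, at the cost of invoking \cite{olshanskii} as a black box.

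Your approach is more self-contained and structurally more informative: taking $N$ elementary abelian lets you observe that for $r\ge2$ the hypothesis $k(k-1)>2n$ is not needed at all, since $N$ contributes no $C_{p^r}$-factors and $m$ may be taken arbitrarily large; the hypothesis is used only in the $r=1$ case, where you are in effect reproving Ol'shanskii's lemma. Two small points of presentation: (i) your treatment of $p=2$ via ``squaring data'' is vague---you could instead use the paper's ring construction $G=1+S$ from Lemma~\ref{lem:bilinear->group}, which works uniformly for all $p$ and makes the $2^{r-1}$-power map transparent ($(1+a+b)^{2^{r-1}}=1+2^{r-1}a$ for $r\ge2$), so the verification that $\bar B\simeq C_{2^r}^{2k}$ goes through without extra bookkeeping; (ii) the claim that $k(k-1)>2n$ is ``precisely the sharp threshold'' for your quadratic in $m$ is a slight overstatement---plugging $m=k-1$ into your inequality gives the weaker requirement $2n<k^2+k+2$---but the hypothesis certainly suffices and your bound $f(k+1)<-(k+1)^2$ is correct, which is all that is needed.
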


For the remainder,
for any natural number $t$, we use the notation 
\[
\Omega_t(G)=\langle x\in G\,|\,x^{p^t}=1\rangle
\textrm{\ and\ }
\mho_t(G)=\langle x^{p^t}\,|\,x\in G\rangle.\]
Note that if $G$ is abelian, then
\[
\Omega_t(G)=\{x\in G\,|\,x^{p^t}=1\}
\textrm{\ and\ }
\mho_t(G)=\{x^{p^t}\,|\,x\in G\}.\]
Furthermore, in this case $G/\Omega_t(G)\simeq \mho_t(G)$, 
and $\mho_t(G)$ equals the $t$-th term of the Frattini series of $G$.  

We improve another related result of Mann \cite[Theorem 3]{mann2} as follows
\begin{thm}\label{thm:Mann-G:G2}
  Let $G$ be a $2$-group and let $E$ be a maximal normal elementary
  abelian subgroup of $G$. If $d(E)=k$, then
  $|G:\mho_1(G)|\leq 2^{\frac{k(k+5)}2}$.
\end{thm}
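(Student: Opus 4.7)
The first step is to note that $G/\mho_1(G)$ has exponent $2$ and is therefore elementary abelian; hence $\mho_1(G)=\Phi(G)$, and the statement amounts to $d(G)\leq k(k+5)/2$.

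Let $C=C_G(E)\nor G$; then $E\leq Z(C)$, and since $Z(C)$ is characteristic in $C$, the subgroup $\Omega_1(Z(C))$ is normal in $G$, elementary abelian, and contains $E$. By the maximality of $E$, $\Omega_1(Z(C))=E$; writing $Z(C)$ as a direct product of cyclic $2$-groups, the number of factors equals $d(\Omega_1(Z(C)))=k$, so $d(Z(C))=k$. The conjugation action of $G$ on $E\cong(\FF 2)^k$ has kernel $C$, so $G/C$ is a $2$-subgroup of $GL(k,2)$; since a Sylow $2$-subgroup of $GL(k,2)$ has order $2^{k(k-1)/2}$, we obtain $d(G/C)\leq k(k-1)/2$. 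Together with $d(G)\leq d(G/C)+d(C/(C\cap\Phi(G)))$, this reduces the problem to proving $d(C/(C\cap\Phi(G)))\leq 3k$.

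To establish this bound I would use the following consequence of the maximality of $E$: any abelian characteristic subgroup $A$ of $C$ containing $E$ satisfies $d(A)=k$. Indeed $\Omega_1(A)$ is then characteristic in $C$, hence normal elementary abelian in $G$, so $\Omega_1(A)=E$, which forces $A$ to decompose as a product of $k$ cyclic $2$-groups. Applying this to a short filtration of $C$ by abelian characteristic subgroups---built from the upper central series of $C$ together with applications of the squaring map $c\mapsto c^2$---each layer contributes at most $k$ generators to $C/(C\cap\Phi(G))$, giving the bound $3k$ provided only three layers are needed.

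The main obstacle is to verify that three such layers suffice. I expect this to come from the interplay between $\Omega_1(Z(C))=E$, which makes squaring essentially injective on the relevant central quotients of $C$, and the $G/C$-action on $C$, which pushes sufficiently deep commutators into $[C,G]\leq \Phi(G)$. Carrying out this bookkeeping through the filtration is the step where the proof refines Mann's original argument in \cite{mann2}.
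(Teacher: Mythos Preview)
Your opening reduction is correct and actually a little slicker than the paper's: since a group of exponent~$2$ is abelian, $\mho_1(G)=\Phi(G)$ for $2$-groups, so the statement is equivalent to $d(G)\le k(k+5)/2$. Your bound $d(G/C)\le\binom{k}{2}$ via $G/C\hookrightarrow GL(k,2)$ also agrees with the paper. The problem is entirely in the remaining step.

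You need $d\bigl(C/(C\cap\Phi(G))\bigr)\le 3k$, and here the proposal is only a sketch: you describe a putative filtration of $C$ by characteristic abelian subgroups and assert that ``three layers suffice'', but you give no argument for this, and you explicitly flag it as the main obstacle. The observation that any characteristic abelian subgroup of $C$ containing $E$ has rank~$k$ is correct, but it does not by itself control the number of layers needed to reach $C$ modulo $\Phi(G)$; a priori the upper central series of $C$ could be long. The vague appeal to ``squaring is essentially injective'' and ``commutators land in $\Phi(G)$'' is not a proof.

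The paper does not argue this way. Instead it invokes Sambale's idea: choose a maximal normal abelian subgroup $A\ge E$ of exponent at most~$4$; Alperin's theorem then gives $\Omega_2(C_G(A))=A\le Z(C_G(A))$, so $C_G(A)$ is $2$-central. From this one gets two concrete bounds: (i) $C/C_G(A)$ is elementary abelian and a theorem of MacWilliams yields $|C:\Phi(C)|\le 2^{2k}$, hence $|C:C_G(A)|\le 2^{2k}$; and (ii) Mann's result on $p$-central groups gives $|C_G(A):\mho_1(C_G(A))|\le |\Omega_1(C_G(A))|=|E|=2^k$. Multiplying indices yields the theorem. These are the specific ingredients your filtration argument would have to replace, and nothing in the proposal does so.

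Incidentally, once one knows Sambale's bound $d(C)\le 2k$, your reduction is actually sharper than the paper's chain of indices: since $C\nor G$ implies $\Phi(C)\le\Phi(G)$, one gets $d(G)\le d(G/C)+d(C)\le\binom{k}{2}+2k=k(k+3)/2$, improving the stated exponent. But the content lies precisely in establishing that bound on $C$, which your proposal leaves open.
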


Knowing $d(E)$ for a maximal normal elementary abelian subgroup
$E\nor G$ not only gives restrictions on $d(H)$ for subgroups $H$
of $G$, but on the structure of subgroups of $G$ more deeply. A
particularly interesting question could be that what can be said about
the cyclic decomposition of an abelian subgroup $A$ of $G$ if such an
information is known.  In this paper we prove that under the same
assumption as of Theorem \ref{thm:Th-Mann}, the number of factors in
the cyclic decomposition of $A$ which are larger than $C_p$ is more
restricted.
\begin{thm}\label{thm:d_PhiA_leq_2d_PhiE}
  Let $G$ be a finite $p$-group and let $E$ be a maximal 
  normal elementary abelian subgroup of $G$.
  If $d(E)=k$, and $A$ is any abelian subgroup of $G$, then 
  \begin{enumerate}
  \item $d(\Phi(A))\leq 2k$ for $p>2$,
  \item $d(\Phi(A))\leq 3k$ for $p=2$.
  \end{enumerate}
\end{thm}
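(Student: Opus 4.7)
The plan is to produce a subgroup $A_0\leq A$ satisfying $|A/A_0|\leq p^k$ and $A_0^p\leq C_A(E)$, and then to split $d(\Phi(A))$ into contributions from $A/A_0$ and from $A_0^p$, each of which I hope to bound by $k$ (respectively by $2k$ for $p=2$). To construct $A_0$, let $\rho\colon A\to\End(E)$ be induced by conjugation, with $E$ viewed additively as an $\FF p$-vector space of dimension $k$. Since $A$ is a $p$-group acting on the normal $p$-subgroup $E$, each $\rho(a)$ is unipotent, so $R:=\FF p[\rho(A)]\leq\End(E)$ is a commutative $\FF p$-subalgebra of the form $\FF p\cdot\id\oplus R_0$ with $R_0$ a nilpotent ideal. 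Invoking the commutative algebra lemma advertised in the abstract, $R$ contains a subalgebra $J$ with $J^2=0$ and $\codim_R J\leq k$; from $J^2=0$ one also sees $J\subseteq R_0$. Define $A_0:=\{a\in A:\rho(a)-\id\in J\}$. The identity $\rho(ab)-\id=(\rho(a)-\id)+(\rho(b)-\id)+(\rho(a)-\id)(\rho(b)-\id)$ combined with $J^2=0$ makes $A_0$ a subgroup of $A$; the assignment $a\mapsto\rho(a)\cdot(1+J)$ embeds $A/A_0\hookrightarrow (1+R_0)/(1+J)$, so $d(A/A_0)\leq \log_p|(1+R_0)/(1+J)|\leq k$. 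Finally, for $a\in A_0$ one computes $\rho(a)^p=1+(\rho(a)-\id)^p=1$ (by $J^2=0$ in characteristic $p$), so $A_0^p\leq C_A(E)=:C$.

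Since $A$ is abelian, $a\mapsto a^p$ is a group homomorphism inducing a surjection $A/A_0\twoheadrightarrow A^p/A_0^p$, so
\[
d(\Phi(A))=d(A^p)\leq d(A^p/A_0^p)+d(A_0^p)\leq d(A/A_0)+d(A_0^p)\leq k+d(A_0^p).
\]
The problem thus reduces to bounding $d(A_0^p)$ by $k$ for odd $p$ (and by $2k$ for $p=2$).

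\emph{The main obstacle} is precisely this last bound. Since $A_0^p\leq C$ centralizes $E$, the product $A_0^p\cdot E\leq C_G(E)$ is abelian and contains $E$, and its $\Omega_1$ equals $\Omega_1(A_0^p)\cdot E$. The key leverage is the maximality $E=\Omega_1(Z(C_G(E)))$: I would aim to show $\Omega_1(A_0^p)\leq E$, which yields $d(A_0^p)=d(\Omega_1(A_0^p))\leq d(E)=k$. Equivalently, for each $a\in A_0$ with $a^{p^2}=1$, one needs $a^p\in Z(C_G(E))$; this is automatic on $E$ itself but requires $a^p$ to commute with all of $C_G(E)$. The obstruction is cohomological---a class in $\Hom(C_G(E)/E,E)$ measuring the failure of $a^p$ to centralize $C_G(E)/E$---and its vanishing should follow either from a further application of the commutative algebra lemma to the $A$-action on some characteristic elementary abelian subquotient of $C_G(E)/E$ (whose $\FF p$-dimension one would independently bound by $k$ using the maximality of $E$), or from a direct commutator calculation exploiting $(\rho(a)-\id)^2=0$. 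For $p=2$ this step is expected to give only $d(A_0^p)\leq 2k$, accounting for the stated bound $3k$.
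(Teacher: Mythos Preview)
Your decomposition $d(\Phi(A))\leq k+d(A_0^p)$ is fine, and your construction of $A_0$ via the zero-subalgebra lemma is correct, but the step you flag as ``the main obstacle'' is a genuine gap, and your proposed attack on it is aimed at the wrong target. You try to force $a^p\in Z(C_G(E))$ so as to use $E=\Omega_1(Z(C_G(E)))$; there is no reason this should hold, and the ``cohomological obstruction'' sketch does not lead anywhere concrete. What actually closes the argument for odd $p$ is Alperin's theorem \cite{alperin}: if $E$ is a maximal normal elementary abelian subgroup of a $p$-group $G$ with $p>2$, then $\Omega_1(C_G(E))\leq E$ --- every element of order $p$ in $C_G(E)$ already lies in $E$, with no centrality hypothesis needed. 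Since your $A_0^p\leq C_G(E)$, this gives $\Omega_1(A_0^p)\leq E$ at once, hence $d(A_0^p)\leq k$. For $p=2$ Alperin's theorem fails as stated, and one instead uses a result of Sambale (based on a second application of Alperin's theorem to a maximal normal abelian subgroup of exponent $\leq 4$ containing $E$, together with a theorem of MacWilliams) to get $d(H)\leq 2k$ for any $H$ with $E\leq H\leq C_G(E)$; applied to $H=A_0^pE$ this yields $d(A_0^p)\leq 2k$.

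Once you have these external inputs, your proof works, but the paper's route is shorter: rather than building $A_0$, simply take $B=A\cap C_G(E)$. Then Alperin (resp.\ Sambale) gives $d(B)\leq k$ (resp.\ $2k$) directly, while $A/B$ embeds into $GL(k,p)$ and the zero-subalgebra lemma, packaged as Theorem~\ref{thm:d_Phi_G}, gives $d(\Phi(A/B))\leq k$. The inequality $d(\Phi(A))\leq d(\Phi(A/B))+d(B)$ then finishes. In other words, your $A_0$ is doing extra work to arrange $A_0^p\leq C_G(E)$, but since you ultimately only use that containment, you might as well intersect with $C_G(E)$ from the outset and apply the zero-subalgebra lemma on the quotient side.
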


It seems quite possible that if the stronger condition $nr(G)\leq k$
holds, then the number of generators of any abelian subgroup $A$ is at
most linear in $k$ (see Question \ref{que:d(A)_leq_2max_d(E)}). 
By Theorem \ref{thm:d_PhiA_leq_2d_PhiE}, such a bound holds for the 
generating number of the Frattini subgroup of any abelian subgroup of $G$.
As another piece of evidence let us quote the following:
\begin{thm}[Alperin, Glauberman \cite{alperin-glauberman}]
  Let $G$ be a finite $p$-group satisfying one of the following conditions.
  \begin{enumerate}
  \item $p$ is odd and $p>4 r(G)-7$;
  \item $G$ has nilpotency class at most $p$;
  \end{enumerate}
  Then $nr(G)=r(G)$.
\end{thm}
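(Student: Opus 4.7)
The inequality $nr(G)\le r(G)$ is immediate from the definitions, so the content of the theorem is the reverse: under either hypothesis, some abelian subgroup of maximum rank $r=r(G)$ must already be normal in $G$. I would approach both parts by a \emph{replacement} strategy in the spirit of Thompson's classical replacement theorem: starting from an abelian subgroup $A$ of rank $r$, produce successive abelian subgroups $A,A',A'',\ldots$ of the same rank that lie ``closer'' to being normal (measured by the intersection $|A\cap B|$ with a fixed maximal normal abelian subgroup $B$, which grows at each step). The process must terminate, and one must then argue that the terminal subgroup is itself normal.

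For part~(2), the hypothesis of class at most $p$ strongly suggests the \textbf{Lazard correspondence}. When the class is at most $p-1$, Lazard gives a bijection between $G$ and a nilpotent Lie ring $L$ on the same underlying set, under which subgroups correspond to subrings, normal subgroups to ideals, and abelian subgroups to abelian subrings; the boundary case class $=p$ can be handled by a truncation argument (this is where the constant is tight). The assertion thus reduces to the Lie-ring statement: in a nilpotent Lie ring of $p$-power order, every abelian subring of maximum rank can be replaced by an abelian ideal of the same rank. The Lie-ring version is linear-algebraic: given an abelian subring $\mathfrak a$ of maximum rank, I would consider the adjoint action of $G$ on a chief-factor series refining a normal abelian subring, and push $\mathfrak a$ through this series via an elementary replacement (replace a non-ideal generator with a suitable element of the centralizer of a higher term), using rank-preservation at each step.

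For part~(1), the peculiar bound $p>4r(G)-7$ is there precisely so that Thompson-style replacements remain \emph{abelian}. Concretely, when one performs a replacement one has to expand $p$-th powers of products via the Hall--Petresco formula $(xy)^p=x^py^p[y,x]^{\binom{p}{2}}\cdots$, and the number of higher commutators that must be killed to preserve abelianness grows linearly with the rank of the subgroup; the hypothesis $p>4r(G)-7$ is calibrated so that the $\binom{p}{k}$ coefficients vanish $\pmod p$ in the needed range. Granted this, the same replacement loop as before terminates and produces a normal abelian subgroup of rank~$r$.

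The main obstacle in both cases is the bookkeeping of the replacement: one must exhibit an integer-valued function (typically $|A\cap B|$ together with a secondary invariant) that strictly increases under the replacement, verify that the replacement truly preserves both the abelian property and the rank, and finally show that stability of this function under all normal abelian $B$ forces $\langle A^G\rangle$ to be abelian. I would expect the class~$p$ boundary in part~(2) and the sharpness of the constant $4r(G)-7$ in part~(1) to be the delicate points; both amount to checking that the Hall--Petresco (respectively Baker--Campbell--Hausdorff) expansion has no uncontrollable denominators of $p$ up to the relevant depth.
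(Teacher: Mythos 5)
The first thing to note is that the paper does not prove this statement at all: it is quoted as a known theorem of Alperin and Glauberman (from \cite{alperin-glauberman} and Glauberman's subsequent papers) and used only as supporting evidence for Question~\ref{que:d(A)_leq_2max_d(E)}. So there is no in-paper argument to compare yours against; your proposal has to stand on its own, and it does not constitute a proof. What you have written is a plan: the central step --- that a non-normal abelian subgroup of maximal rank can always be replaced by an abelian subgroup of the same rank with strictly larger intersection with a fixed maximal normal abelian subgroup, and that the terminal subgroup of the process is normal in $G$ --- is exactly the hard content of the theorem, and you assert it rather than establish it. Thompson-style replacement theorems are stated for abelian subgroups of maximal \emph{order}, and even there the replacement produces a subgroup normalized by the fixed subgroup $B$, not a normal subgroup of $G$; upgrading ``maximal order'' to ``maximal rank'' and ``normalized by $B$'' to ``normal in $G$'' are both nontrivial, and the second is precisely where counterexamples live for small $p$ (the paper itself points to Alperin's and Glauberman's examples with $nr(G)<r(G)$).

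Two specific points fail. For (1), your explanation of the constant $4r(G)-7$ via Hall--Petresco coefficients does not hold up: $\binom{p}{k}\equiv 0 \pmod p$ for every $1\le k\le p-1$ independently of $r(G)$, so no calibration of the form $p>4r(G)-7$ can be extracted from the vanishing of those binomial coefficients; the obstruction in the Hall--Petresco expansion is the weight-$p$ term, whose coefficient is $1$, and the published bound instead arises from controlling the nilpotency class of the subgroup generated by the conjugates of $A$ so that Lie-ring methods apply. For (2), the Lazard correspondence as you invoke it requires class at most $p-1$ for the full subgroup/ideal dictionary; the boundary case of class exactly $p$ is the delicate case and cannot be dispatched by an unspecified ``truncation argument.'' Moreover, even granting the reduction to Lie rings, the Lie-ring statement (every abelian subring of maximal rank can be replaced by an abelian ideal of the same rank) is itself a theorem requiring proof, not elementary linear algebra. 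Your instincts about which tools are relevant are good --- the published proofs do use replacement and Lie-ring methods --- but none of the genuinely hard steps is carried out.
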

In contrast, examples of Alperin \cite[Exercise 31, p.~349]{huppert}
and Glauberman \cite{glauberman} shows that $nr(G)$ can be strictly
smaller than $r(G)$.

%
A key result in this paper (which is essential for the proof of
Theorem \ref{thm:d_PhiA_leq_2d_PhiE}) says that a commutative
subalgebra $\A\leq \Hom(V)$ is ``close to being a zero algebra'' in the
following sense.
\begin{thm}\label{thm:big_zerosubal}
  Let $V$ be an $n$-dimensional vector space over the field $K$ and 
  let $\A\leq\Hom(V)$ be a commutative algebra. Then there exists 
  a zero algebra $\B\leq \A$ satisfying $\codim(\B,\A)\leq n$.
\end{thm}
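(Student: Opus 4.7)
The plan is to prove the theorem by induction on $n = \dim V$, the base case $n = 0$ being trivial. First I would reduce to the case of $\A$ local. After adjoining the identity to $\A$ if necessary (which changes codimensions by at most $1$, absorbable in the bound), the commutative Artinian algebra $\A$ decomposes via its central idempotents as $\A = \prod_i A_i$ with each $A_i$ local. Correspondingly $V = \bigoplus_i V_i$ with $A_i$ acting faithfully on $V_i$, so $\sum_i \dim V_i = n$. Applying the inductive hypothesis to each pair $(A_i, V_i)$ gives zero subalgebras $\B_i \le A_i$ with $\codim(\B_i, A_i) \le \dim V_i$, and $\B = \bigoplus_i \B_i$ is a zero subalgebra of $\A$ of codimension at most $n$.

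Next I would handle the cyclic case: assume $\A$ is local and $V = \A v$ is a cyclic $\A$-module. Then the map $a \mapsto av$ is a $K$-linear isomorphism $\A \to V$, since by commutativity any $a$ with $av = 0$ satisfies $a\cdot\A v = \A\cdot av = 0$, so $aV = 0$ and hence $a = 0$ by faithfulness. Thus $\dim \A = n$ and $\B = 0$ trivially satisfies $\codim(\B,\A) = n$.

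For non-cyclic $V$, I would choose a cyclic filtration $0 = V_0 \subsetneq V_1 \subsetneq \cdots \subsetneq V_s = V$ of $\A$-submodules with each quotient $V_i/V_{i-1}$ cyclic over $\A$; such a filtration exists by iteratively picking cyclic submodules. Setting $m_i = \dim V_i/V_{i-1}$ we have $\sum_i m_i = n$, and cyclicity gives $\dim \A|_{V_i/V_{i-1}} = m_i$. Hence the natural map $\A \to \bigoplus_i \A|_{V_i/V_{i-1}}$ has image of dimension at most $\sum_i m_i = n$, so its kernel $\A_0 = \{a \in \A : aV_i \subseteq V_{i-1}\,\forall\,i\}$ satisfies $\codim(\A_0,\A) \le n$. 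Defining $\A^{(k)} = \{a \in \A : aV_i \subseteq V_{i-k}\,\forall\,i\}$ (with $V_j = 0$ for $j \le 0$), the relations $\A^{(k)}\A^{(\ell)} \subseteq \A^{(k+\ell)}$ and $\A^{(s)} = 0$ show that $\B := \A^{(\lceil s/2\rceil)}$ satisfies $\B^2 \subseteq \A^{(s)} = 0$, so $\B$ is automatically a zero subalgebra.

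The main obstacle will be bounding $\codim(\B,\A) \le n$, which is not obvious since $\B$ lies deeper in the filtration than $\A_0$. The approach is to exploit the commutativity of $\A$ to show that the off-diagonal blocks in its block-triangular representation relative to the cyclic filtration are heavily correlated, so that the codimension of $\A^{(\lceil s/2\rceil)}$ still does not exceed $n$. If this direct filtration argument does not close, a fallback strategy is to construct $\B$ as $\B_W = \{a \in \A : aV \subseteq W,\, aW = 0\}$ for a carefully chosen $\A$-submodule $W$ (e.g.\ a cyclic $W = \A v$ with $\dim \A v$ maximal); this automatically gives $\B_W^2 = 0$, and $\codim(\B_W,\A) = \dim W + \dim\bigl(\Ann(W)|_{V/W}\bigr)$, so the remaining difficulty is to prove $\dim(\Ann(W)|_{V/W}) \le \dim V/W$ by combining the maximality of $W$ with the inductive hypothesis applied to $\A|_{V/W}$.
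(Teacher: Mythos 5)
Your proposal has a genuine gap at exactly the point you flag as ``the main obstacle,'' and neither of your two routes closes it. The primary route, taking $\B=\A^{(\lceil s/2\rceil)}$ in a cyclic filtration, actually fails: consider $V=W\oplus U$ with $\dim W=\dim U=m$ and $\A=K\cdot 1+\Hom(V/W,W)$ acting by block upper-triangular matrices. This is a commutative subalgebra of $\Hom(V)$ of dimension $m^2+1$, any cyclic filtration has length $s=m$ with $V_i=W+\langle v_1,\dots,v_i\rangle$, and $\A^{(k)}=\{N: Nv_1=\dots=Nv_k=0\}$ has codimension $km+1$; for $k=\lceil s/2\rceil$ this is on the order of $n^2/8$, far exceeding $n=2m$. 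So ``exploiting commutativity to show the off-diagonal blocks are heavily correlated'' cannot rescue this particular $\B$ -- you must choose the subalgebra differently. The fallback $\B_W=\{a\in\A: aV\subseteq W,\ aW=0\}$ is closer in spirit to what works, but the inequality you still owe, $\dim\bigl(\operatorname{Ann}(W)|_{V/W}\bigr)\le\dim V/W$, is not what the inductive hypothesis delivers (induction gives a small-codimension zero subalgebra of $\A|_{V/W}$, not a dimension bound on the image of the annihilator, and commutative subalgebras of $\Hom(V/W)$ can have dimension quadratic in $\dim V/W$). Without a concrete argument using the maximality of $W$, this is the whole theorem restated, not a reduction of it. The preliminary reductions (adjoining $1$, splitting by idempotents into local factors, the cyclic case) are all correct but do not touch the core difficulty.

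For comparison, the paper avoids filtrations of $V$ entirely and instead runs a greedy descent on ideals of $\A$: starting from $\A_0=\A$, if $\ker(\A_i^2)\ne V$ pick $x\notin\ker(\A_i^2)$ and set $\A_{i+1}=\{a\in\A_i: a(x)\in\ker(\A_i)\cap\A_i(x)\}$. The codimension of this step equals $\codim(\ker(\A_i)\cap\A_i(x),\A_i(x))$, and commutativity gives $\A_{i+1}(\A_i(x))=\A_i(\A_{i+1}(x))\subseteq\A_i(\ker(\A_i))=0$, so $\ker(\A_{i+1})\supseteq\ker(\A_i)+\A_i(x)$ grows by at least the same amount. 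Hence the total codimension stays bounded by $n-\dim\ker(\A)$, and the process terminates in a zero ideal. You may want to compare this with your $\B_W$ idea: the paper's $\A_{i+1}$ is essentially an iterated, quantitatively controlled version of it, where the bookkeeping is done on $\dim\ker(\A_i)$ rather than on a module filtration.
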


As a consequence of Theorem \ref{thm:big_zerosubal}
we have the following 
\begin{thm}\label{thm:d_Phi_G}
  Let $A\leq GL(n,p)$ be an abelian subgroup.  Then
  $|A:\Omega_1(O_p(A))|\leq p^n$. In particular, if $A\leq GL(n,p)$ is an
  abelian $p$-subgroup then there are at most $n$ many factors in the
  cyclic decomposition of $A$, which are larger than $C_p$.
  In other words, $d(\Phi(A))\leq n$ for any abelian $p$-subgroup
  $A\leq GL(n,p)$.
\end{thm}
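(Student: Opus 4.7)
The plan is to translate the group-theoretic bound on $|A:\Omega_1(O_p(A))|$ into the algebraic bound of Theorem~\ref{thm:big_zerosubal}. Let $V=\FF{p}^n$ be the natural module for $GL(n,p)$, and let $\A\leq\End(V)$ be the unital $\FF{p}$-subalgebra spanned by $A$. Since $A$ is abelian, $\A$ is commutative, and since $A\leq GL(V)$ every element of $A$ lies in the unit group of $\A$. By Theorem~\ref{thm:big_zerosubal} there is a zero subalgebra $\B\leq\A$ with $\codim_\A(\B)\leq n$, so $|\A/\B|\leq p^n$.

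The key step is to analyse the set-theoretic map $\phi\colon A\to\A/\B$, $a\mapsto a+\B$, and to show that its fibres have size at most $|\Omega_1(O_p(A))|$. Suppose $\phi(a_1)=\phi(a_2)$, so that $b:=a_1-a_2\in\B$. Writing $a_1 a_2^{-1}=1+c$ with $c:=b a_2^{-1}\in\A$, commutativity of $\A$ combined with $b^2=0$ gives $c^2=b^2 a_2^{-2}=0$. In characteristic $p$ this forces $(1+c)^p=1$, so $a_1 a_2^{-1}$ has order dividing $p$. Because $A$ is abelian, every such element lies in $\Omega_1(O_p(A))$, and hence the fibre of $\phi$ through $a_2$ is contained in the coset $a_2\cdot\Omega_1(O_p(A))$. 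Counting yields $|A|\leq |\phi(A)|\cdot|\Omega_1(O_p(A))|\leq p^n\cdot|\Omega_1(O_p(A))|$, which is the desired bound.

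The remaining assertions are routine consequences. For an abelian $p$-group $A\cong\prod_{i=1}^m C_{p^{a_i}}$ one has $[A:\Omega_1(A)]=p^{\sum_i(a_i-1)}$, while $d(\Phi(A))$ equals the number of indices with $a_i\geq 2$, which is at most $\sum_i(a_i-1)$. Combining this with the first part gives $d(\Phi(A))\leq n$.

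The only subtle point is that Theorem~\ref{thm:big_zerosubal} produces a zero subalgebra and not a zero ideal; a priori this obstructs an argument based on passing to the quotient $\A/\B$ as a ring. What saves us is the identity $c^2=b^2 a_2^{-2}$: commutativity of $\A$ alone suffices to propagate $\B^2=0$ into the conclusion that $a_1 a_2^{-1}=1+c$ is square-zero unipotent, and that observation is the heart of the argument.
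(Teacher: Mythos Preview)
Your proof is correct and follows essentially the same approach as the paper: both arguments generate the commutative algebra $\A$ from $A$, invoke Theorem~\ref{thm:big_zerosubal} to obtain a zero subalgebra $\B$ of codimension at most $n$, and then translate $|\A/\B|\leq p^n$ back to the index bound. The only difference is cosmetic: the paper first enlarges $A$ to a \emph{maximal} abelian subgroup, so that $A=U(\A)$ and hence $1+\B$ sits inside $A$ as an elementary abelian subgroup contained in $\Omega_1(O_p(A))$; you instead keep $A$ arbitrary and use the fibre argument (the identity $c^2=b^2a_2^{-2}=0$) to show each coset $a_2+\B$ meets $A$ inside a single coset of $\Omega_1(O_p(A))$. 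Your variant trades the one-line reduction step for a short computation, but the substance is identical.
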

\begin{rem}
  Let $k=n/(p+1)$ and $V=V_1\oplus\ldots \oplus V_k$ with
  $\dim(V_i)=p+1$ for each $i$. Furthermore, let
  $g_1,\ldots,g_k\in GL(V)$ be such that $g_i|_{V_j}=\id_{V_j}$ for
  $i\neq j$ while $g_i|_{V_i}$ corresponds to a unipotent Jordan-block
  for each $i$. Then $o(g_i)=p^2$ and
  $\langle g_1,\ldots, g_k\rangle =\langle g_1\rangle \times \langle
  g_2\rangle\times\ldots\times \langle g_k\rangle\simeq C_{p^2}^k$, so
  the upper bound in Theorem \ref{thm:d_Phi_G} is essentially the
  best possible.
\end{rem}

An immediate consequence is that $|A|\leq p^n-1$ holds for any abelian
$p'$-subgroup of $GL(n,p)$. Previously, we only knew of a proof for
this fact which depends on Maschke's theorem.  (Note that a subgroup
of $GL(n,p)$ generated by a Singer cycle has order exactly $p^n-1$, so
this bound is the best possible.)

\section{Proofs}
Let $V$ be an $n$ dimensional vector space over a field $K$ and 
let $\A\leq \Hom(V)$ be a commutative subalgebra in the full 
endomorphism algebra $\Hom(V)$ of $V$. We use the notation 
$\A^2$ for the subalgebra of $\A$ generated by all products 
$\{xy\,|\,x,y\in \A\}$. Furthermore, let 
$\ker(\A):=\cap_{a\in \A}\ker(a)=\{v\in V\,|\,a(v)=0\ \forall a\in \A\}$. 
Clearly, $\A$ is a zero algebra if and only if $\ker(\A^2)=V$. 
In what follows, for two subspaces $U\leq W$, the codimension of 
$U$ in $W$ is denoted by $\codim(U,W):=\dim(W)-\dim(U)$.
First, we prove the following stronger theorem than
Theorem \ref{thm:big_zerosubal}.
Note that in our terminology the property ``ideal'' also 
implies ``$K$-subspace''.
\begin{thm}
  Let $V$ be an $n$ dimensional vector space over the field $K$ and
  let $\A\leq\Hom(V)$ be a commutative algebra with $\dim(\ker(\A))=k$. 
  Then there is an ideal $\B$ of $\A$ satisfying 
  $\codim(\B,\A)\leq n-k$ and $\B^2=0$.
\end{thm}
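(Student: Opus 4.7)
The plan is to proceed by induction on $\dim V = n$, with the trivial base case $n = 0$. The easy case is $\A^2 = 0$: take $\B = \A$, giving $\codim(\B, \A) = 0 \leq n - k$.

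In the general case, the key construction is the ideal
\[I := \{a \in \A : a(V) \subseteq K\}.\]
One verifies $I^2 = 0$ directly: for $a, b \in I$, $(ab)(V) = a(b(V)) \subseteq a(K) = 0$. The quotient $\bar{\A} = \A/I$ embeds into $\End(V/K)$, reducing the problem to a space of dimension $n - k$ (strictly smaller when $k > 0$). Applying the inductive hypothesis to $\bar{\A} \leq \End(V/K)$ yields an ideal $\bar{\B} \leq \bar{\A}$ with $\bar{\B}^2 = 0$ and $\codim(\bar{\B}, \bar{\A}) \leq (n-k) - \dim\ker(\bar{\A})$, where $\dim \ker(\bar \A) = \dim(\ker \A^2 / K)$.

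Pulling $\bar{\B}$ back to $\B_1 := \pi^{-1}(\bar{\B})$ produces an ideal of $\A$ with the correct codimension, but $\bar{\B}^2 = 0$ only implies $\B_1^2 \subseteq I$, not $\B_1^2 = 0$. To refine, set $W := \pi^{-1}(\ker \bar{\B}) \supseteq K$. Then $\B_1 V \subseteq W$ (since $\bar{\B}(V/K) \subseteq \ker \bar{\B}$) and $\B_1 W \subseteq K$ (since $\ker \bar{\B} = W/K$, so elements of $\B_1$ send $W$ modulo $K$ to zero). Define
\[\B := \{a \in \B_1 : a(W) = 0\}.\]
This is easily checked to be an ideal of $\A$ (if $a(W)=0$ and $c\in\A$, then $(ca)(W)=c(a(W))=0$), and the chain $\B^2 V \subseteq \B W = 0$ gives $\B^2 = 0$.

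The main obstacle is twofold. First, the codimension bookkeeping: naively $\B_1/\B$ injects into $\Hom(W/K, K)$, giving a bound of $k \cdot \dim(W/K)$, which is too loose for the target $\codim(\B, \A) \leq n - k$. Getting the sharp constant $n-k$ will require a more refined choice of $\bar{\B}$ (or a re-application of the inductive hypothesis to an auxiliary action of $\A$ on $W$) in order to exchange some codimension lost at the lifting step against the codimension gained at the refinement step. Second, the case $k = 0$ is problematic, since then $V/K = V$ and induction on $V/K$ does not decrease $\dim V$; this case must be handled separately, likely via the decomposition of $\A$ into its local components and a direct construction of a square-zero ideal inside each local piece, exploiting the structure theory of commutative Artinian algebras and the fact that a faithful module over such an algebra must have dimension at least the length of a suitable filtration of its maximal ideal.
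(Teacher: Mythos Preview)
Your plan has two acknowledged gaps, and both are genuine obstructions rather than details to be filled in.

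\textbf{The $k=0$ case cannot be quarantined.} Even if you start with $k>0$, passing to $\bar\A$ on $V/K$ can land you in the $k=0$ situation: $\ker(\bar\A)=\ker(\A^2)/K$, and whenever $\A^2=\A$ (for instance if $\A$ is unital) this is zero. So the $k=0$ case is not a boundary case to be handled once at the start---it recurs inside the induction, and your proposed fix via local decomposition is both vague and field-dependent. Over a non-perfect field the Wedderburn--Malcev splitting need not exist, and even over a perfect field the ``direct construction'' inside each local piece is essentially the whole problem again.

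\textbf{The lifting loss is real.} Your bound $\codim(\B,\B_1)\le k\cdot\dim(W/K)$ is the honest one for the map $\B_1\to\Hom(W,K)$, and there is no evident mechanism in your setup to trade this against the inductive gain. Strengthening the inductive hypothesis to also track $\dim\ker(\bar\B)$ might help, but carrying that through would force you toward the invariant the paper uses anyway.

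\textbf{What the paper does instead.} The paper avoids both problems by never passing to a quotient of $V$. It builds a descending chain of ideals $\A=\A_0>\A_1>\cdots$ inside $\A$, maintaining at each step the invariant
\[
\dim\ker(\A_i)\ \ge\ \codim(\A_i,\A)+k.
\]
If $\A_i^2=0$, stop. Otherwise pick any $x\notin\ker(\A_i^2)$, set $U_i=\ker(\A_i)$, $V_i=\A_i(x)$, and define
\[
\A_{i+1}=\{a\in\A_i : a(x)\in U_i\cap V_i\}.
\]
The evaluation map $a\mapsto a(x)$ shows $\codim(\A_{i+1},\A_i)=\codim(U_i\cap V_i,V_i)=:m_i>0$. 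Commutativity of $\A$ gives $\A_{i+1}(V_i)=\A_{i+1}\A_i(x)=\A_i\A_{i+1}(x)\subseteq\A_i(U_i)=0$, so $V_i\subseteq\ker(\A_{i+1})$; together with $U_i\subseteq\ker(\A_{i+1})$ this yields $\dim\ker(\A_{i+1})\ge\dim(U_i)+m_i$, re-establishing the invariant. The $\A$-invariance of $U_i\cap V_i$ ensures $\A_{i+1}$ is an ideal of $\A$, not just of $\A_i$. The process terminates in at most $n-k$ steps.

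The conceptual difference: you try to shrink $V$ and then repair the damage when lifting; the paper shrinks $\A$ directly, using a single witness vector $x$ at each step so that the codimension lost in $\A$ is \emph{exactly} matched by the kernel gained in $V$. This bookkeeping is automatic in their construction and is precisely what your approach struggles to recover.
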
 
\begin{proof}
  We define a series of integers 
  $0=l_0<l_1<\ldots\leq n-k$ and ideals 
  $\A=\A_0>\A_1>\ldots$ of $\A$ such that $\codim(\A_i,\A)=l_i$ and 
  $\dim(\ker(\A_i))\geq l_i+k$ holds for every $i$. 
  For $i=0$, the pair $l_0=0,\ \A_0=\A$ clearly satisfies both conditions. 
  Let us assume that we found the pair $l_i,\A_i$ for some $i$. Now, 
  if $\ker(\A_i^2)=V$, then statement of the Theorem holds for $\B:=\A_i$.

  Otherwise, let us choose an $x\in V$ such that
  $x\notin \ker(\A_i^2)$. This means that
  $V_i=\A_i(x):=\{a(x)\,|\,a\in \A_i\}$ is not contained in
  $U_i:=\ker(\A_i)$. Since $\A_i$ is an ideal of $\A$, both $U_i$ and
  $V_i$ are $\A$-invariant, so $U_i\cap V_i$ is also
  $\A$-invariant. Now, let
  \[
  \A_{i+1}=\{a\in \A_i\,|\,a(x)\in U_i\cap V_i\},\ 
  m_i:=\codim(U_i\cap V_i,V_i)>0,\ l_{i+1}:=l_i+m_i.
  \]
  Since $\varphi_x:a\mapsto a(x)$ defines a surjective linear map 
  $\varphi_x:\A_i\mapsto V_i$ and $\A_{i+1}=\varphi_x^{-1}(U_i\cap V_i)$ 
  it readily follows that $\codim(\A_{i+1},\A_i)=m_i>0$, so
  $\codim(\A_{i+1},\A)=l_{i+1}>l_i$. Furthermore, the $\A$-invariance of 
  $U_i\cap V_i$ and $\A_i\nor \A$ implies that $\A_{i+1}\nor \A$.  

  It remains to prove that $\dim(\ker(\A_{i+1}))\geq l_{i+1}+k$.
  (Thus, $l_{i+1}\leq n-k$ also holds!) For any $y\in V_i$ we have
  $y\in \A_i(x)$, so
  $\A_{i+1}(y)\subset \A_{i+1}(\A_i(x))=\A_i(\A_{i+1}(x))\subset
  \A_i(U_i)=0$ by using the commutativity of $\A$ and the definition
  of $\A_{i+1}$ and $U_i$.  Therefore, $V_i\leq \ker(\A_{i+1})$. On
  the other hand, $U_i=\ker(\A_i)\leq \ker(\A_{i+1})$, so
  $\dim(\ker(\A_{i+1}))\geq \dim(U_i+V_i)= \dim(U_i)+\codim(U_i\cap
  V_i,V_i)\geq l_i+k+m_i=l_{i+1}+k$ also holds.

  Trivially, the series $0=l_0<l_1<\ldots\leq n-k$ has length at most 
  $n-k+1$, so we find a sufficient ideal $\B=\A_i$ for some $i$ 
  in at most  $n-k+1$ many steps.
\end{proof}
\begin{proof}[Proof of Theorem \ref{thm:d_Phi_G}]
  We only need to prove the first statement, since 
  the second statement is just a special case of the first.

  Let $V$ be an $n$ dimensional vector space over $\FF{p}$, so we can
  view $A$ as a subgroup of $GL(V)$. If $A\leq B\leq GL(V)$ and $B$ is
  also an abelian subgroup, then $\Omega_1(O_p(A))=A\cap \Omega_1(O_p(B))$, so
  $|A:\Omega_1(O_p(A))|\leq |B:\Omega_1(O_p(B))|$. Thus, we can assume that
  $A\leq GL(V)$ is maximal among the abelian subgroups of $GL(V)$. Let
  $\A\leq \Hom(V)$ be the subalgebra of $\Hom(V)$ generated by $A$.
  Then $\A$ is commutative and $A=U(\A)$ is the unit group of $\A$.  By
  Theorem \ref{thm:big_zerosubal}, there is a zero algebra $\B\leq \A$
  with $\codim(\B,\A)\leq n$. Then $1+\B\leq A$ is an elementary abelian
  $p$-subgroup of $A$, so $1+\B\leq \Omega_1(O_p(A))$. Therefore,
  \[
  |A:\Omega_1(O_p(A))|\leq \frac{|A|}{|1+\B|}\leq \frac{|\A|}{|\B|}=
  p^{\codim(\B,\A)}\leq p^n,
  \] 
  and the claim follows.
\end{proof}
Before the proof of Theorem \ref{thm:d_PhiA_leq_2d_PhiE}, we 
summarise an idea of Sambale, which can be found in the proof of 
\cite[Theorem 1.3]{sambale}). This idea will also be used in the proofs of 
 Theorem \ref{thm:Th-Mann_p=2} and Theorem \ref{thm:Mann-G:G2}.

 Let $G$ be a finite $2$-group and let $E$ be a maximal elementary
 abelian normal subgroup of $G$ with $d(E)=k$. Let $C=C_G(E)$.  Choose
 a maximal abelian normal subgroup $A$ of exponent at most $4$ which
 contains $E$. Then obviously $C_G(A)\leq C$. By a result of Alperin
 \cite[Theorem]{alperin} (see also \cite[III, 12.1 Satz]{huppert}),
 $\Omega_2(C_G(A))=A\leq Z(C_G(A))$, that is, $C_G(A)$ is
 $2$-central. (For the definition and basic properties of $p$-central
 groups see \cite{bubbolini-corsi} and \cite{mann2}.)  Sambale
 observed that $C/C_G(A)$ is elementary abelian. Furthermore, by using
 a theorem of MacWilliams (see \cite[Theorem 37.1]{berkovich}), he
 showed that $|C:\Phi(C)|\leq 2^{2k}$.  We note that Sambale's
 argument can be used without modification to prove that
 $|H:\Phi(H)|\leq 2^{2k}$ holds for any subgroup $H$ satisfying
 $E\leq H\leq C$.
\begin{proof}[Proof of Theorem \ref{thm:d_PhiA_leq_2d_PhiE}]
  First, we consider the case $p>2$. In accordance with the
  assumption, let $E$ be a maximal elementary abelian normal subgroup
  of $G$ with $d(E)=k$. Let $C$ be the centraliser of $E$ in $G$. Then
  $\Omega_1(C)\leq E$ by \cite[Theorem]{alperin}. If $A\leq G$ is
  any abelian subgroup, then $d(A\cap C)=d(\Omega_1(A\cap C))\leq
  d(E)=k$ holds.  The action of $A$ on $E$ defines
  an injection $A/A\cap C\mapsto \aut(E)\simeq GL(k,p)$, so
  $d(\Phi(A/A\cap C))\leq k$ by Theorem \ref{thm:d_Phi_G}. 
  Therefore, 
  \[
  d(\Phi(A))\leq d(\Phi(A)(A\cap C))\leq d(\Phi(A/A\cap C))+d(A\cap C)\leq 2k.
  \]

  Now, we turn to the case $p=2$. Let $E$ be a maximal elementary
  abelian normal subgroup of $G$ with $d(E)=k$ and $C=C_G(E)$.
  Furthermore, let $A\leq G$ be any abelian subgroup.  Using the
  aforementioned result of Sambale we get $d(A\cap C)\leq d((A\cap
  C)E)\leq 2k$.  On the other hand, the same argument as in case $p>2$
  proves that $d(\Phi(A/A\cap C))\leq k$. Therefore,
  \[
  d(\Phi(A))\leq d(\Phi(A/A\cap C))+d(A\cap C)\leq k+2k=3k.
  \]
\end{proof}
Now, we show that MacWilliams' Theorem \cite[Theorem A]{macwilliams}
can be extended for $p=2$, as well.
\begin{proof}[Proof of Theorem \ref{thm:Macwilliams_p=2}]
  We only point out, how MacWilliams' argument must be modified to
  hold also for $p=2$. MacWilliams proof can be divided into two parts.
  \begin{enumerate}
    \item First, she proves that if $p$ is an odd prime and $G$ is a $p$-group, 
      then there is a subgroup $H\leq G$ of nilpotency class at most two 
      satisfying $d(G)\leq d(H)$.
    \item Second, starting from a $p$-subgroup $G\leq GL(n,p)$ of
      nilpotency class at most two, she modify it to get a $\tilde
      G\leq GL(n,p)$ with $d(G)\leq d(\tilde G)$ such that $d(\tilde
      G)$ can easily be calculated.
  \end{enumerate}
  It turns out that part (2) of MacWilliams' proof works also for the
  case $p=2$, but the claim in part (1) is not valid for $p=2$.
  However, there is a similar statement which also follows for $p=2$.
  Let $w=x^{p^2}[y,z]\in F_3$ be a word, i.e. an element of the free
  group $F_3=\langle x,y,z\rangle$. For any $p$-group $P$, let
  $w(P)=\langle w(g_1,g_2,g_3)\,|\, g_1,g_2,g_3\in P\rangle$ be the
  verbal subgroup of $P$ defined by $w$. Thus, $s(P):=|P:w(P)|$ equals
  the order of the largest abelian quotient of $P$ with exponent at
  most $p^2$.  Now, results of Gonz\'alez-S\'anchez and Klopsch
  (\cite[Lemma 3.1]{gonzalez-klopsch} and \cite[Theorem
  3.3]{gonzalez-klopsch}) imply that there is a subgroup
  $G_1\leq G\leq GL(n,p)$ of nilpotency class $\leq 2$ such that
  $s(G_1)= s(G)$. Now, the initial step of MacWilliams' modifying
  argument can be used to find a $G_2=N\rtimes H\leq GL(n,p)$ of
  nilpotency class $\leq 2$ such that $|G_2|=|G_1|$, furthermore
  $N\nor G_1$ and $\Phi(G_2)=G_2'=[N,G_2]=[N,G_1]\leq G_1'$.  (For
  details, see \cite[page 135]{macwilliams}.)  Therefore,
  \[s(G)=s(G_1)\leq |G_1:G_1'|\leq |G_2:G_2'|=|G_2:\Phi(G_2)|=p^{d(G_2)}.\]
  Now, $d(G_2)\leq \frac{1}{4}n^2$ by MacWilliams' argument, so 
  $s(G)\leq p^{\frac{1}4n^2}$ which readily implies $d(G)\leq \frac{1}{4}n^2$. 
\end{proof}
\begin{proof}[Proof of Theorem \ref{thm:Th-Mann_p=2}]
  Let $C=C_G(E)$ be the centraliser of $E$ and let $H\leq G$ be any
  subgroup of $G$. Using Sambale's result to the
  group $(H\cap C)E$ we get that $d(H\cap C)\leq d((H\cap C)E)\leq 2k$.  On
  the other hand, $H/(H\cap C)$ is included in
  $\aut(E)\simeq GL(k,2)$, so $d(H/(H\cap C))\leq \frac{1}4k^2$ by
  Theorem \ref{thm:Macwilliams_p=2}.  Therefore,
  $d(H)\leq d(H\cap C)+d(H/(H\cap C))\leq 2k+\frac{1}4k^2$, as
  claimed.
\end{proof}
\begin{proof}[Proof of Theorem \ref{thm:Mann-G:G2}]
  Let $E$ be a maximal elementary abelian normal subgroup of $G$ with
  $d(E)=k$ and let $C=C_G(E)$. Then $G/C$ is a subgroup of
  $\aut(E)\simeq GL(k,2)$, so $|G:C|\leq 2^{\binom{k}{2}}$. As in
  Sambale's argument (see the paragraph preceding the proof of Theorem
  \ref{thm:d_PhiA_leq_2d_PhiE}) choose a maximal abelian normal
  subgroup $A$ of exponent at most $4$ which contains $E$.  Then
  $C/C_G(A)$ is elementary abelian, and $|C:\Phi(C)|\leq 2^{2k}$, so
  $|C:C_G(A)|\leq |C:\Phi(C)|\leq 2^{2k}$. Furthermore, $C_G(A)$ is
  $p$-central, so, by using \cite[Proposition 4]{mann2}, we get that
  $|C_G(A):\mho_1(C_G(A))|\leq |\Omega_1(C_G(A))|=|E|=2^k$.
  Therefore, 
  \begin{align*}
  |G:\mho_1(G)|&\leq |G:\mho_1(C_G(A))|=
  |G:C|\cdot |C:C_G(A)|\cdot |C_G(A):\mho_1(C_G(A))|\\
  &\leq 2^{\binom{k}2+2k+k}=2^{\frac{k(k+5)}2}.
  \end{align*}
\end{proof}
\section{Related problems}
In this section we pose some problems related to the above results. 
A positive answer to the following question would be a generalization
of Theorem \ref{thm:d_Phi_G}.
\begin{que}\label{que:section}
  Let $G\leq GL(n,p)$ be a $p$-group and let $H/K$ be an abelian
  section of $G$, that is, $K\nor H\leq G$ with $H/K$ abelian. Is is
  true that there are at most $n$ many factors in the cyclic
  decomposition of $H/K$, which are larger than $C_p$? Or, at least, 
  is the number of such factors bounded by $O(n)$?
\end{que}
Another possible generalisation of Theorem \ref{thm:d_Phi_G} is
\begin{que}
  Let $G\leq GL(n,p)$ be $p$-central. Is it true that
  $|\Omega_2(G)/\Omega_1(G)|\leq p^n$?
\end{que}
By \cite[Lemma C]{mann2}, the $p$-central assumption implies that
$\Omega_{2}(G)$ is of exponent $p^2$ and of nilpotency class $2$.
Furthermore,
$|\Omega_{i+1}(G)/\Omega_i(G)|\leq |\Omega_2(G)/\Omega_1(G)|$ for
every $i\geq 2$ in a $p$-central group.

Note that if $G\leq GL(n,p)$ is a $p$-Sylow subgroup of $GL(n,p)$,
then $d(\Phi(G))=2n-5$, so the final conclusion in Theorem
\ref{thm:d_Phi_G} does not remain valid if the abelian condition for
$G$ is dropped. In fact, the below example shows that there exists a
$p$-group $G\leq GL(n,p)$ such that $d(\Phi(G))$ is roughly $n^2/4$.
\begin{example}
  Let $\{E_{ij}\,|\,1\leq i,j\leq n\}$ be the usual basis of the vector space of
  $n\times n$ matrices over $\FF p$ and let $1$ denote the $n\times n$
  identity matrix. 
  Let 
  \[
    G=\Big\{1+\sum_{i<j} a_{ij}E_{ij}\,\Big|\,a_{ij}=0\textrm{ if }
    j<\lceil n/2\rceil
    \textrm{ or }i>\lceil n/2\rceil\Big\}\leq GL(n,p).
  \]
  Then
  \[
    \Phi(G)=G'=Z(G)=
    \Big\{1+\sum_{i<j} a_{ij}E_{ij}\,\Big|\,a_{ij}=0\textrm{ if }
    j\leq\lceil n/2\rceil
    \textrm{ or }i\geq \lceil n/2\rceil\Big\}
  \]
  has rank $\lfloor n/2\rfloor\cdot (\lceil n/2\rceil-1)$.
\end{example}
Maybe the abelian condition in Theorem \ref{thm:d_Phi_G} can be
weakened to several important classes of $p$-groups. We ask
\begin{que}
  Let $G\leq GL(n,p)$ be a $p$-central, powerful or regular $p$-group.
  Is it true that
  $d(\Phi(G))\leq n$ (or $O(n)$)?
\end{que}
The next problem is similar to Theorem \ref{thm:big_zerosubal}. It
might be useful to answer Question \ref{que:section}.
\begin{que}\label{que:algebra}
  Let $V$ be an $n$ dimensional vector space over the field $K$ and
  let $A \leq \Hom(V )$ be a nilpotent algebra. Does there exist a
  subalgebra $B \leq A$ of codimension at most $n$ (or $O(n)$) such
  that $B^2\leq [B,B]$ (or, at least, $B^2\leq [A,A]$)?
\end{que}
One might think that Question \ref{que:section} could be 
reduced to Theorem \ref{thm:d_Phi_G} by showing
that if a finite $p$-group has a quotient
isomorphic to $(C_{p^r})^{n}$ for some $r$ and $n$, then it always
contains a subgroup isomorphic to $(C_{p^r})^{n}$ (or, at least
$(C_{p^r})^{\varepsilon n}$ for some absolute constant
$\varepsilon>0$). However, this is not the case; For $r=1$, this has
been proved by Ol'shanskii \cite{olshanskii}. Using his result, we
now prove Theorem \ref{thm:Endre_konstrukcio}, which is a 
generalisation of the above statement for any $r\geq 1$.

First we prove a lemma.
\begin{lem}\label{lem:bilinear->group}
  Let $R$ be a commutative ring, $A=R^n$, $B=R^k$ and let
  $\varphi:A\times A\mapsto B$ be an alternating $R$-bilinear map.
  Then there is a $2$-nilpotent group $G$ and $G'\leq N\leq Z(G)$ such
  that $G/N\simeq A,\ N\simeq B$ as abelian groups and the commutator
  map $[.\,,\,.]: G/N\times G/N\mapsto N,\ (xN,yN)\mapsto [x,y]$
  agrees with $\varphi$ under these isomorphisms.
\end{lem}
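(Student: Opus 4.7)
The plan is to construct $G$ explicitly as a central extension of $A$ by $B$ whose cocycle antisymmetrizes to $\varphi$. Concretely, I take the underlying set of $G$ to be $A\times B$ and endow it with a twisted multiplication
$$
(a_1,b_1)(a_2,b_2) = \bigl(a_1+a_2,\ b_1+b_2+f(a_1,a_2)\bigr),
$$
where $f\colon A\times A\to B$ is a bilinear map to be chosen so that $f(a_1,a_2)-f(a_2,a_1)=\varphi(a_1,a_2)$.

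The existence of such an $f$ is where the hypothesis $A=R^n$ is used. Fix the standard basis $e_1,\dots,e_n$ of $A$, define $f(e_i,e_j):=\varphi(e_i,e_j)$ for $i<j$ and $f(e_i,e_j):=0$ otherwise, and extend $R$-bilinearly. Since $\varphi$ is alternating, a direct check on basis pairs shows $f(x,y)-f(y,x)=\varphi(x,y)$ for all $x,y\in A$; freeness of $A$ as an $R$-module is what allows such a ``strictly upper triangular'' lift.

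Next I verify the group axioms. Associativity amounts to the $2$-cocycle identity
$$
f(a_1,a_2)+f(a_1+a_2,a_3) = f(a_1,a_2+a_3)+f(a_2,a_3),
$$
which is immediate from bilinearity. The identity element is $(0,0)$, and the inverse is $(a,b)^{-1}=(-a,-b+f(a,a))$. The subset $N:=\{0\}\times B$ satisfies $(a,b)(0,c)=(0,c)(a,b)=(a,b+c)$ because $f(a,0)=f(0,a)=0$, so $N$ is central, $N\simeq B$ as an abelian group, and projection onto the first factor yields $G/N\simeq A$.

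Finally, since $N$ is central I can compute commutators directly: for $g_i=(a_i,b_i)$ the relation $g_1g_2 = g_2g_1\cdot\bigl(0,f(a_1,a_2)-f(a_2,a_1)\bigr)$ gives
$$
[g_1,g_2] = \bigl(0,\ \varphi(a_1,a_2)\bigr)\in N.
$$
This yields all remaining conclusions at once: $G'\leq N\leq Z(G)$, so $G$ is $2$-nilpotent, and the induced commutator map $G/N\times G/N\to N$ coincides with $\varphi$ under the isomorphisms $G/N\simeq A$ and $N\simeq B$. There is no essential obstacle; the only subtle point is selecting $f$ with the correct antisymmetrization, which I handle by using the basis of $A=R^n$.
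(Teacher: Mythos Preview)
Your proof is correct and is essentially the same construction as the paper's: both take the underlying set $A\oplus B$, both choose the ``strictly upper triangular'' lift $f(e_i,e_j)=\varphi(e_i,e_j)$ for $i<j$ and $0$ otherwise, and both obtain the same group law. The only difference is packaging: the paper phrases this as forming a ring $S=A\oplus B$ with $S^3=0$ and setting $G=1+S$ (so that $(1+s)(1+t)=1+s+t+st$ recovers exactly your twisted multiplication), whereas you phrase it directly as a central extension with bilinear $2$-cocycle $f$; the two descriptions are the same object.
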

\begin{proof}
  First, if $S$ is any ring with $S^3=0$, then $G:=1+S$ is a $2$-nilpotent
  group with group operation $(1+s)(1+t):=1+s+t+st$ satisfying 
  $G'\leq 1+S^2\leq Z(G)$. Furthermore, 
  $[1+s,1+t]=(1-s+s^2)(1-t+t^2)(1+s)(1+t)=1+st-ts$ holds for every $s,t\in S$. 
  
  Now, starting from $A,B,\varphi$ we construct a ring $S$ with
  underlying abelian group $A\oplus B$.  Let $e_1,\ldots, e_n$ be the
  canonical basis of $A$. We define the multiplication on $S$ as
  \[
  BS=SB=0,\ e_ie_j=\left\{
    \begin{array}{ll}
      \varphi(e_i,e_j)&\textrm{if }i<j,\\
      0              &\textrm{if }i\geq j
    \end{array}
    \right.
  \]
  and we extend it to the whole $S$ in  a distributive way. 
  Then $S$ is a ring with $S^3=0$, so $G=1+S$ is a group and $N=1+B$ satisfies
  $G'\leq 1+S^2\leq N\leq Z(G)$. Furthermore, the maps 
  $(1+a)N\mapsto a$ and $1+b\mapsto b$ ($a\in A,\ b\in B)$ define isomorphisms 
  $G/N\mapsto A$ and $N\mapsto B$, respectively. 
  Finally, for every $1\leq i,j\leq n$ we have 
  \[
  [(1+e_i)N,(1+e_j)N]=1+e_ie_j-e_je_i=1+\varphi(e_i,e_j). 
  \]
  Thus the commutator map 
  $[.\,,\,.]: G/N\times G/N\mapsto N$ agrees with  $\varphi$ 
  on the set of generators $\{(1+e_i)N\,|\,1\leq i\leq n\}$ 
  under the above isomorphisms, so it agrees with $\varphi$ on the whole $G/N$. 
\end{proof}
\begin{rem}\leavevmode
\begin{enumerate}
\item The above construction also works in the more general case if
  $A$ is any (not necessarily finite dimensional) free $R$-module and 
  $B$ is any $R$-module. 
\item If there is a half of every element in $B$ (for example, when
  $R$ is a $K$-algebra over a field $K$ of characteristic different
  from $2$), then the multiplication $A\times A\mapsto B$ can be
  defined in a more natural way by choosing
  $a_1a_2:=\frac{1}{2}\varphi(a_1,a_2)$. In that case the exponent of
  $G$ always agrees with the exponent of $R$ as an additive group.
\end{enumerate}
\end{rem}
\begin{proof}[Proof of Theorem \ref{thm:Endre_konstrukcio}]
  Let $\tilde{A}:=\ZZ_p^n,\ \tilde{B}:=\ZZ_p^k$ and let 
  $\tilde{\varphi}:\tilde A\times \tilde A\mapsto \tilde B$ be an
  alternating bilinear map such that there is no $k$-dimensional
  completely isotropic subspace of $A$ with respect to
  $\tilde{\varphi}$. (Since $2n<k(k-1)$, such a map exists by
  \cite[Lemma 2.]{olshanskii}.) Let $M_{\tilde\varphi}\in
  (\tilde{B})^{n\times n}$ be the matrix form of $\tilde\varphi$ with
  respect to the natural basis of $\tilde A=\ZZ_p^n$ so $M_{\tilde\varphi}$ is 
  an alternating matrix over $\tilde{B}$.

  Let us choose $R=\ZZ_{p^r},\ A=R^n,\ B=R^k$, so $A\simeq C_{p^r}^n$
  and $B\simeq C_{p^r}^k$ as abelian groups.  Let $M_\varphi\in
  B^{n\times n}$ be an alternating matrix over $B$ such that the
  natural homomorphism $\pmod p:\ZZ_{p^r}\mapsto \ZZ_{p}$ maps
  $M_\varphi$ to $M_{\tilde\varphi}$ and let $\varphi: A\times A\mapsto B$ 
  be the alternating map whose matrix is $M_\varphi$ with respect to the 
  natural basis of $A$. 

  By Lemma \ref{lem:bilinear->group} and its proof, there are
  $p$-groups $G=G(A,B,\varphi)$ and
  $\tilde{G}=G(\tilde A,\tilde{B},\tilde\varphi)$ of the form $G=1+S$
  and $\tilde{G}=1+\tilde{S}$.  By construction, the $\pmod p$-map
  extends to a surjective ring homomorphism $S\mapsto \tilde{S}$, so
  it also defines a surjective group homomorphism $\rho:G\mapsto
  \tilde{G}$ whose kernel is $K=1+pS$. Now, for any $s\in S$ we have
  $(1+ps)^{p^{r-1}}=1+p^rs+\binom{p^{r-1}}{2}\cdot p^2 s^2=1$, so the
  exponent of $K$ is $p^{r-1}$.

  It remains to prove that $G$ does not contain any abelian subgroup
  isomorphic to $C_{p^r}^{2k}$. Assuming the converse, let $H\leq G$
  be such a subgroup.  Then $\rho(H)\simeq H/H\cap K$ is an abelian
  subgroup of $\tilde{G}$ such that $d(\rho(H))=2k$. Therefore, the
  image of $\rho(H)$ under the natural map $\tilde G\mapsto \tilde
  G/\tilde B\simeq \tilde A$ is a completely isotropic subspace with
  respect to the form $\varphi$ whose dimension is at least $k$, which
  is a contradiction.
\end{proof}
Some results from \cite{gonzalez-klopsch} suggest that
Question \ref{que:section} might be reduced to $p$-groups of nilpotency
class $2$ as follows.
\begin{que}
  Let $G$ be a finite $p$-group such that $G$ has a quotient isomorphic to 
  $(C_{p^r})^l$ for some positive integers $l$ and $r>1$. Is it true that
  $G$ contains a subgroup of nilpotency class at most $2$ with this property?
\end{que}
By Example \ref{ex:Th-Mann_best}, if we only assume that $G$ contains
a maximal abelian normal subgroup $A$ with $d(A)=k$, then
$O(k^2)$ is the smallest general upper bound to $r(G)$. On the other
hand, if we assume that $d(A)\leq k$ for \emph{every} maximal abelian normal
subgroup $A$ of $G$ (i.e. we assume that $nr(G)\leq k$), then we do
not know any similar example. So we may ask:
\begin{que}\label{que:d(A)_leq_2max_d(E)}
  Let $G$ be a $p$-group and let us assume that $nr(G)\leq k$, that
  is, $d(A)\leq k$ for every abelian normal subgroup $A$ of $G$.  Is
  it true that $r(G)\leq 2k$, that is, $d(B)\leq 2k$ holds for every
  abelian subgroup $B$ of $G$ ?
\end{que}
\begin{rem}\label{rem:d(A)_leq_2max_d(E)}
  The $k$-term direct power $D_{16}^k$ (where $D_{16}$ is the dihedral
  group of order $16$) shows that this bound is the best possible. 
\end{rem}
One can ask a similar question, but using the order of abelian
subgroups instead of their rank.
\begin{que}\label{que:|A|_leq_max_|E|^2}
  Let $G$ be a $p$-group and let us assume that $|A|\leq p^m$ for
  every abelian normal subgroup $A$ of $G$. Is is true that 
  $|B|\leq p^{2m}$ holds for every abelian subgroup $B$ of $G$? 
\end{que}
\begin{rem}
  Examples of Alperin and Glauberman \cite[Exercise 31, p.~349]{huppert},
  \cite{glauberman4} show that there exists a
  $p$-group $G$ for which
  $\max\{|B|\,|\,B\leq G \textrm{ is abelian}\}$ is strictly larger
  than $\max\{|A|\,|\,A\nor G \textrm{ is abelian}\}$.  Moreover, if
  $p\geq 5$, then there exists a group of exponent $p$ with this
  property.

  On the other hand, under various conditions (for example when $G$ is
  metabelian \cite{gillam} or it has nilpotency class at most $p-1$
  \cite{glauberman2}) there is a normal abelian subgroup among the
  abelian subgroups of maximal order.
\end{rem}

\end{document}